\let\origmaketitle\maketitle
\def\maketitle{
  \begingroup
  \def\uppercasenonmath##1{} 
  \let\MakeUppercase\relax 
  \origmaketitle
  \endgroup
} 
\author{Miriam Ram\'irez \&  Ger\'onimo Uribe Bravo}
\address{Instituto de Matem\'aticas\\ 
Universidad Nacional Aut\'onoma de M\'exico}
\thanks{
Research supported by UNAM-DGAPA-PAPIIT 
grant IN114720. }
\newtheorem{theorem}{Theorem}
\newtheorem{lemma}{Lemma}
\newtheorem{proposition}{Proposition}
\newtheorem{definition}{Definition}
\newcommand{\R}{\ensuremath{\mathbb{R}}}
\newcommand{\pX}{\mathbb{P}}
\newcommand{\espX}{\mathbb{E}}
\newcommand{\p}[1]{\ensuremath{\mathbb{P}\! \left( #1 \right)}}
\newcommand{\E}[1]{\ensuremath{\mathbb{E}\! \left[ #1 \right]}}
\newcommand{\floor}[1]{\ensuremath{\lfloor #1 \rfloor}}
\newcommand{\N}{\ensuremath{\mathbb{N}}}
\newcommand{\ep}{\ensuremath{\varepsilon}}
\newcommand{\fil}{\ensuremath{\mathscr{F}}}
\newcommand{\Fil}{\ensuremath{\widehat{\mathscr{F}}}}
\DeclareMathOperator{\Id}{Id}
\DeclareMathOperator{\I}{I}
\newcommand{\cadlag}{c\`adl\`ag }
\title{The Sticky L\'evy Process  as a solution to a Time Change Equation}
\subjclass[2010]{
60G51
, 60G17
, 34F05
}
\begin{document}
    \begin{abstract}
		Stochastic Differential Equations (SDEs) were originally devised by It\^o  to provide a pathwise construction of diffusion processes. 
		A less explored approach to represent them is through Time Change Equations (TCEs) as put forth by Doeblin. 
		TCEs are a generalization of Ordinary Differential Equations driven by random functions. 
		We present  a simple example where TCEs have some advantage over SDEs.

		We represent sticky L\'evy processes 
		as the unique solution to a TCE driven by a L\'evy process with no negative jumps. 
		The solution is adapted to the time-changed filtration of the L\'evy process driving the equation. 
		This is in contrast to the SDE describing sticky Brownian motion, 
		which is known to have no adapted solutions as first proved by Chitashvili. 
		A known consequence of such non-adaptability for SDEs is that certain natural approximations to the solution of the corresponding SDE do not converge in probability, even though they do converge weakly. 
		Instead, we provide strong approximation schemes for the solution of our TCE (by adapting Euler's method for ODEs), 
		whenever the driving L\'evy process is strongly approximated. 
	\end{abstract}	

\maketitle


	\section{Introduction and statement of the results}
		Feller's discovery of sticky boundary  behavior for Brownian motion on $[0,\infty)$ (in \cite{MR0047886,MR0063607}) is, undoubtedly, a remarkable achievement. 
		The discovery is inscribed in the problem of describing
		every diffusion processes on $[0,\infty)$ that behaves as a Brownian motion 
		up to the time the former first hits $0$. 
		See \cite{MR3271518} for a historical account and \cite{MR154338} for probabilistic intuitions and constructions. 
		We now consider a definition for sticky L\'evy processes associated L\'evy processes which only jump upwards 
		(also known as Spectrally Positive L\'evy process and abbreviated SPLP). 
		General information on SPLPs can be consulted in \cite[Ch. VII]{MR1406564}.

		\begin{definition}
		Let $X$ be a SPLP and $X^0$ stand for $X$ killed upon reaching zero. 
	 	An \emph{
        extension} of $X^0$ will be \cadlag\ a strong Markov process $Z$ with values in $[0,\infty)$ 
		such that $X$ and $Z$ have the same law if killed upon reaching $0$. 
	 	We say that $Z$ is a \emph{L\'evy process with sticky boundary at $0$ based on $X$} 
		(or a sticky L\'evy process for short) 
		if $Z$ is an extension of $X^0$ for which $0$ is regular and instantaneous and which spends positive time at zero. In other words, 
		if $Z_0=0$ then 
		\[
		0=\inf\{t>0:Z_t=0\}=\inf\{t>0: Z_t\neq 0\}
		\quad\text{and}\quad
		\int_0^\infty \I(Z_s=0)\,ds >0\quad\text{almost surely. }
		\]
	 	\end{definition}

	 	It is well known that sticky Brownian motion satisfies a stochastic differential equation (SDE) of the form
	 	\begin{equation}\label{E:sde}
	 	Z_t=z+\int_0^t \I(Z_s>0)\, dB_s+\gamma \int_0^t \I(Z_s=0)\, ds,\quad t\geq 0, 
	 	\end{equation}
	 	where $B$ is a standard Brownian motion, 
	 	the stickiness parameter $\gamma$ is strictly positive and $\I$ denotes the indicator function.
	 	This equation has no strong solutions, 
		which means that any process satisfying \eqref{E:sde} involves some extra randomness to that of Brownian motion $B$. 
	 	This result was conjectured by Skorohod and initially proved by R. Chitashvili in \cite{chitashviliTR} (later published as \cite{MR1639096})
		and \cite{MR1478711}. More recent proofs can be found in \cite{MR3271518, MR3183576} and \cite{MR3607798}. 
	 	In contrast to the representation of the sticky Brownian motion as a solution to an SDE, 
		we propose a representation of any SPLP with a sticky boundary as a solution to a TCE. 
	 	The particularity of our representation is that it does not require any extra randomness to that generated by the L\'evy process driving the equation. 
	 	In the L\'evy process case, a fundamental hypothesis to construct sticky L\'evy processes will be  that the sample paths have unbounded variation on any interval. Equivalently, we can assume that either there is a Gaussian component or the sum of jumps is absolutely divergent (i.e. $\sum_{s\leq t}|X_s-X_{s-}|=\infty$ almost surely for some $t>0$). 
	 	
	 	\begin{theorem}\label{T:sLpasSolutiontoaTCE}
	 		Let $X$ be a SPLP adapted to a  right-continuous and complete filtration $(\fil_t, t\geq 0)$. 
			Assume that the sample paths of $X$ have unbounded variation.  
	 		Given a parameter $\gamma > 0$ and a point $z\geq 0$, 
			there exists a unique pair of stochastic processes 
			$Z=(Z_t, t\geq 0)$ and $C=(C_t,t\geq 0)$  satisfying 
	 		\begin{equation}\label{E:cdt}
	 		Z_t=z+X_{C_t}+\gamma\int_0^t \I(Z_s=0)\, ds, \quad 
	 		\text{ where }\quad C_t=\int_0^t \I(Z_s>0)\, ds, 
	 		\end{equation}
	 		for every 	 $t\geq 0$.
	 		For the unique pair $(Z,C)$ verifying Equation \eqref{E:cdt}, 
			it holds that $C$ is a $(\fil_t)$-time change 
			and that $Z$ is adapted to the time-changed filtration $(\Fil_t, t\geq 0)$ given by $\Fil_t=\fil_{C_t}$. 
	 		Furthermore, $Z$ is a sticky L\'evy process based on $X$. 
	 	\end{theorem}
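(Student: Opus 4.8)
My plan is to solve the equation on the ``active'' time scale, where it collapses to the classical Skorokhod reflection problem. Since $Z\geq 0$ the two clocks are complementary, $\int_0^t \I(Z_s>0)\,ds+\int_0^t\I(Z_s=0)\,ds=t$, so writing $A_t:=t-C_t=\int_0^t\I(Z_s=0)\,ds$ the equation reads
\[
Z_t=z+X_{C_t}+\gamma\,(t-C_t).
\]
A first observation, read directly off this identity, is that no solution can have $Z\equiv 0$ on a time interval: on such an interval $C$ would be frozen and $X_{C}$ constant, while $\gamma(t-C_t)$ increases linearly, forcing $Z$ to leave $0$. Hence $\{Z=0\}$ has empty interior and $C$ is strictly increasing and continuous, so it admits a continuous inverse $\theta=C^{-1}$; this invertibility is what lets me pass freely between the two time scales.

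For existence I would build the solution explicitly. Let $R$ be $z+X$ reflected at $0$ in the Skorokhod sense, $R_u=z+X_u+L_u$ with $L_u=-\inf_{v\le u}\big((z+X_v)\wedge 0\big)$; because $X$ is spectrally positive its running infimum is reached continuously, so $L$ is a continuous nondecreasing process carried by $\{R=0\}$. I set $\theta_u=u+\gamma^{-1}L_u$, let $C=\theta^{-1}$ and define $Z_t:=R_{C_t}$. Then $Z_t=z+X_{C_t}+\gamma^{-1}L_{C_t}$ and, by construction, $\gamma^{-1}L_{C_t}=\theta_{C_t}-C_t=t-C_t$, so it remains only to verify the single occupation identity $\int_0^t\I(Z_s=0)\,ds=\gamma^{-1}L_{C_t}$. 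Changing variables $s=\theta_u$, so $ds=du+\gamma^{-1}dL_u$, turns the left-hand side into $\int_0^{C_t}\I(R_u=0)\,du+\gamma^{-1}\int_0^{C_t}\I(R_u=0)\,dL_u$; the second term equals $\gamma^{-1}L_{C_t}$ since $dL$ lives on $\{R=0\}$, while the first term must vanish because the reflected process spends zero Lebesgue time at $0$. This is exactly where the unbounded-variation hypothesis is essential: a bounded-variation SPLP would sit at its infimum along its decreasing stretches, the reflected process would then spend positive time at $0$, and the clock identity would break. Establishing this occupation identity, and with it the rigorous two-way change of time scale, is the step I expect to be the main obstacle.

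Uniqueness then follows by running the same change of scale backwards. Given any solution $(Z,C)$, the empty-interior observation makes $C$ a homeomorphism with inverse $\theta$; setting $Y_u:=Z_{\theta_u}$ and $\ell_u:=\gamma(\theta_u-u)$ one checks that $Y_u=z+X_u+\ell_u$ with $Y\geq 0$, and $\ell$ continuous, nondecreasing and increasing only on $\{Y=0\}$. This is precisely the Skorokhod reflection problem for $z+X$, whose solution is unique, forcing $Y=R$ and $\ell=L$; hence $\theta_u=u+\gamma^{-1}L_u$, and therefore $C$ and $Z$ coincide with the explicitly constructed pair.

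For the remaining assertions I would exploit the explicit form $C=\theta^{-1}$. Each $C_t$ is an $(\fil_u)$-stopping time because $\{C_t\le u\}=\{\theta_u\ge t\}=\{u+\gamma^{-1}L_u\ge t\}\in\fil_u$, as $L_u$ is a functional of $X$ on $[0,u]$; together with its monotonicity and continuity this makes $C$ a genuine time change. Since $C_t$ is a stopping time and $R$ is adapted and \cadlag, $Z_t=R_{C_t}$ is $\fil_{C_t}=\Fil_t$-measurable, giving adaptedness to the time-changed filtration. Finally, to identify $Z$ as a sticky L\'evy process I would note that before hitting $0$ one has $L\equiv 0$, so $C_t=t$ and $Z=z+X$, whence $Z$ killed at $0$ is $X^0$; that $Z$ is \cadlag and $[0,\infty)$-valued as a continuous time change of $R$, and is strong Markov by the classical theory of time changes by the inverse of a continuous additive functional; that regularity and instantaneousness at $0$ transfer from $R$ (an unbounded-variation reflected process) through the homeomorphism $\theta$ between the respective zero sets; and that $Z$ spends positive time at $0$ since $\int_0^\infty\I(Z_s=0)\,ds=\gamma^{-1}L_\infty>0$ almost surely.
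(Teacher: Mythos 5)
Your architecture is essentially the paper's as far as existence goes: the same explicit construction (Skorokhod reflection $R=z+X+L$, the time change $C=\theta^{-1}$ with $\theta_u=u+\gamma^{-1}L_u$, and $Z=R\circ C$), and the same adaptedness argument $\{C_t\le u\}=\{u+\gamma^{-1}L_u\ge t\}\in\fil_u$. Your uniqueness argument, however, is genuinely different: the paper proves a comparison/monotonicity property for the deterministic equation (Proposition \ref{P:monotonia}) by contradiction on $\ep$-shifted clocks, whereas you transport an arbitrary solution back to the active clock and invoke uniqueness of the Skorokhod reflection problem; that route is clean and correct once its prerequisites are in place, and it buys a more conceptual explanation of why the solution is a deterministic functional of $X$. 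Likewise, your appeal to classical time-change theory (inverse of a continuous, strictly increasing additive functional) for the strong Markov property is a legitimate substitute for the paper's hands-on argument of shifting the equation at stopping times.

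The problem is that the two facts carrying the real weight are asserted rather than proved, and they are exactly where the hypotheses on $X$ are consumed. First, you take $Z\ge 0$ for granted (``since $Z\ge 0$ the two clocks are complementary''). Nothing in Equation \eqref{E:cdt} forces this a priori; it is precisely spectral positivity that makes it true: a solution could enter $(-\infty,0)$ only continuously, and at such a crossing the clock $C$ freezes while the term $\gamma\int_0^\cdot \I(Z_s=0)\,ds$ is nondecreasing, giving a contradiction. The paper isolates this as Proposition \ref{P:nonNeg}; without it, $\I(Z_s>0)+\I(Z_s=0)=1$ fails, your identity $\ell_u=\gamma(\theta_u-u)=\gamma\int_0^{\theta_u}\I(Z_s=0)\,ds$ breaks, and the Skorokhod identification collapses. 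Second, and more seriously, the occupation identity that you yourself flag as ``the main obstacle'' --- that the reflected process spends zero Lebesgue time at $0$ --- is never established; you only explain why it fails for bounded variation. This is the key probabilistic input, and the paper supplies it via fluctuation theory: by duality, $(X_t-\underline{X}_t,-\underline{X}_t)$ and $(\overline{X}_t,\overline{X}_t-X_t)$ have the same law, so $\pX(R_t=0)=\pX(X_t=\underline{X}_t)\le\pX(\overline{X}_t=0)=0$ for $t>0$, the last equality because unbounded variation makes $0$ regular for $(0,\infty)$ (Rogozin's theorem), and Fubini then gives $\espX\left[\int_0^\infty \I(R_t=0)\,dt\right]=0$, which is the hypothesis of the deterministic existence result (Proposition \ref{P:existencia}). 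The same regularity theorem (for both half-lines) is also what is needed to make ``regularity and instantaneousness transfer from $R$'' more than a phrase. As it stands, the proposal is a correct skeleton with these two load-bearing lemmas missing.
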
 
	 	
	 	This result attempts to honor the memory of Wolfgang Doeblin, the pioneer of TCEs, because for historical reasons that can be consulted in \cite{MR1885582}, the representation of diffusion processes suggested by Doeblin using TCEs is less known than the one given by Kiyosi It\^{o} via SDEs. 
	 	 In particular, the region of applicability of TCEs has not been as carefully delineated as the one for SDEs. Note, however, that TCEs a priori do not even need the notion of a stochastic integral to be stated and, as showed in \cite{MR3689968,MR3098685}, TCEs have much better stability properties than SDEs. 
	 	
	 	To explain the unbounded variation assumption, 
	 	it implies that the Dini derivatives of $X$ are infinite (as proved originally in \cite{MR0242261}; see \cite{MR4130409} for an extension and further applications). 
	 	In other words, at any given stopping time $T$ (such as the hitting time of zero), we have\[
	 	    -\liminf_{h\to 0+}\frac{X_{T+h}-X_T}{h}
	 	   =\limsup_{h\to 0+}\frac{X_{T+h}-X_T}{h}
	 	   =\infty. 
	 	\]This will aid in proving that $0$ is regular and instantaneous for $Z$. 
		The following (counter)example also indirectly shows its relevance: 
		the equation
	 	\[
	 	    h(t)=\beta \int_0^t \I (h(s)>0)\, ds+\gamma \int_0^t \I (h(s)=0)\, ds
	 	\]does not admit solutions if $\beta<0<\gamma$. 
		The difficulty with a time-change equation such as \eqref{E:cdt} is the discontinuity of the indicator functions of $(0,\infty)$ and of $\{0\}$. The success in its analysis follows from an explicit description of a solution in terms of reflection in the sense of Skorohod. This is done for a deterministic version of \eqref{E:cdt} in Proposition \ref{P:existencia} of Section \ref{S:existence}. 

        Sticky L\'evy processes are a one parameter family of processes built from the trajectories of $X$ and are part of the notion of recurrent extensions of $X^0$ analyzed  in \cite{Ramirez2022}
		in terms of three non-negative constants and a measure on $ (0, \infty) $. 
		Such processes are called SPLP (with values) in $[0,\infty)$. 
	 	As in Feller's  result, 
		these parameters describe the domain of the infinitesimal generator $\mathcal{L}$ of the corresponding recurrent extension.
	 	A possible boundary condition describing such a domain  is given by
		\[
			f'(0+)=\gamma^{-1} \mathcal{L}f(0+)
		\] for some constant $\gamma>0$. 
	 	In the Brownian case, this condition corresponds to  the so-called sticky Brownian motion with stickiness parameter $\gamma$. 
	 	Generalizing the Brownian case, 
	 	we will compute the boundary condition for the generator of the sticky L\'evy process of Theorem \ref{T:sLpasSolutiontoaTCE} in Section \ref{S:MartingaleProblem}. 
		Generator considerations are also relevant to explain the assumption on $X$ having no negative jumps: 
		The generator $\mathcal{L}$ of such a L\'evy process acts on functions defined on $\mathbb{R}$, 
		but immediately makes sense on functions only defined on $[0,\infty)$. 
		This last assertion is not true for the generator of a L\'evy process with jumps of both signs. 
	 	

	 	Our second main result exposes a positive consequence of the adaptability of the solution to the TCE \eqref{E:cdt}.
	 	In \cite{MR3183576}, an equivalent system  to the SDE \eqref{E:sde} is studied.
	 	In particular, it is showed that the non-existence of strong solutions 
	 	prevent the convergence in probability of certain natural approximations to the solutions of the corresponding SDE, even though they converge weakly.
	 	In contrast, we present a simple (albeit strong!) approximation scheme for the solution to the TCE \eqref{E:cdt}. 
	 	To establish such a convergence result, we start from an approximation to the L\'evy process $X$ which drives the TCE \eqref{E:cdt}. 

	 	\begin{theorem}\label{T:Euler}
	 		Let $X$ be a SPLP with unbounded variation. 
	 		Let $(Z,C)$ denote the unique solution to the TCE \eqref{E:cdt}. 
	 		Consider $(X^n,n\geq 1)$ a sequence of processes with \cadlag paths, such that each $X^n$ is the piecewise constant extension of some discrete-time process defined on $\N/n$ and starts at $0$.
	 		Suppose that $X^n\to X$ in the Skorohod topology, either weakly or almost surely. 
	 		Let $(z_n,n\geq 1)$ be a sequence of non-negative real numbers converging to a point $z$.  
			Consider the processes $C^n$ and $Z^n$ defined  by $C^n(0)=C^n(0-)=0$, 
	 	\begin{linenomath}
	 		\begin{align}
			\label{E:discretizationC}
	 		C^{n}(t)&=C^n(\floor{nt}/n-)+(t-\floor{nt}/n) \I(Z^n(t)>0)
	 		\intertext{and}
	 	\label{E:discretizationZ}
	 		Z^n(t)&= (z_n+X^n-\gamma\Id) (C^n(\floor{nt}/n))+\gamma\floor{nt}/n. 
	 		\end{align}
	 	\end{linenomath}	
	 		Then $C^n\to C$  uniformly on compact sets and $Z^n \to Z$ in the  Skorohod topology. 
	 		The type of convergence will be weak or almost sure, 
            depending on the type of convergence of $(X^n,n\geq 1)$. 	
	 	\end{theorem}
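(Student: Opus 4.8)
The plan is to reduce the statement to a pathwise continuity property of the solution map and then transport it along the sequence $X^n$. For the weak-convergence case I would first invoke the Skorohod representation theorem to realize $X^n\to X$ in the Skorohod topology almost surely on a common probability space; since the hypothesis guarantees that $X$ has unbounded variation with probability one, both assertions collapse to a single deterministic claim: if, for a fixed path, the piecewise-constant approximants $X^n$ converge to $X$ in the Skorohod topology and $X$ has unbounded variation, then the Euler outputs satisfy $C^n\to C$ locally uniformly and $Z^n\to Z$ in the Skorohod topology. The almost-sure case of the theorem is then exactly this deterministic claim applied pathwise, so the whole argument is really about the stability of the solution map under perturbation of its driver.

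The backbone is the explicit reflection representation of the solution coming from Proposition \ref{P:existencia}. Writing $g=z+X$ and reflecting $g$ in the time-change variable $c$ in the sense of Skorohod produces a non-decreasing reflection term $\gamma a$ with $\gamma a(c)=\big(-\inf_{0\le s\le c}g(s)\big)^+$ and a reflected path $W=g+\gamma a\ge 0$; the solution is then recovered by the time substitution $C=(\Id+a)^{-1}$, $Z_t=W(C_t)$ and $A_t=a(C_t)=t-C_t$. The structural point I would isolate first is that, because $X$ is spectrally positive, its running infimum is continuous, whence $a$ is continuous, $\Id+a$ is a homeomorphism of $[0,\infty)$, and $C$ is continuous. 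I would then derive the analogous discrete identity for the Euler scheme: rewriting \eqref{E:discretizationZ} as $Z^n(t)=z_n+X^n(C^n(\floor{nt}/n))+\gamma(\floor{nt}/n-C^n(\floor{nt}/n))$ exhibits the term $\gamma(\floor{nt}/n-C^n(\floor{nt}/n))$ as a discrete reflection term, so that $C^n$ is the inverse of $\Id+a^n$ for an $a^n$ obtained by the same Skorohod operation applied to $z_n+X^n$ sampled along the discrete clock.

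With both objects expressed through the same three operations — running infimum / Skorohod reflection, time-change inversion, and composition — convergence follows by checking the continuity of each in the appropriate topology. The running-infimum and reflection maps are continuous at $X$ in the Skorohod topology, and spectral positivity (which makes $\inf_{s\le\cdot}X_s$ continuous) upgrades the convergence of the reflection terms to local uniform convergence, a monotone-to-continuous limit argument absorbing the jumps of $a^n$; the inversion map $a\mapsto(\Id+a)^{-1}$ is continuous from locally uniform to locally uniform convergence because $\Id+a$ is strictly increasing, which yields $C^n\to C$ uniformly on compacts; and the composition $Z_t=W(C_t)$ together with its discrete analogue passes to the limit in the Skorohod topology once $C^n\to C$ uniformly, giving $Z^n\to Z$.

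The hard part will be controlling the discretization introduced by the feedback indicator $\I(Z^n(t)>0)$: a priori the per-mesh decisions that freeze or advance $C^n$ could accumulate and drive $a^n$ away from $a$, most dangerously near the real-time set $\{Z=0\}$, which carries positive Lebesgue measure by stickiness even though its preimage $\{W=0\}$ in the $c$-scale is Lebesgue-null. This is exactly where the unbounded-variation hypothesis is used: the infinite Dini derivatives of $X$ force $W$ to leave $0$ immediately and to spend zero time at $0$ in the $c$-scale, so its zero set cannot trap the discrete clock and $a^n$ is squeezed against the continuous $a$ uniformly on compacts. I would make this quantitative by sandwiching $a^n$ between the Skorohod reflections of slightly up- and down-shifted versions of $z_n+X^n$ and letting the mesh tend to zero, closing the estimate with the continuity of $a$.
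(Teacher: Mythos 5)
Your overall route is genuinely different from the paper's. The paper reduces, exactly as you do, to a deterministic statement via Skorohod representation, but then proves that statement (Proposition \ref{P:aproximacion}) softly: the functions $c^n$ are $1$-Lipschitz, hence relatively compact by Arzel\`a--Ascoli, and every subsequential limit is identified with the unique solution through Fatou's lemma combined with the $\ep$-shift comparison argument of Proposition \ref{P:monotonia}. You instead propose to exhibit both the solution and the scheme through the explicit reflection picture ($C=(\Id+L/\gamma)^{-1}$, $Z=R\circ C$) and to conclude from continuity of the running-supremum, inversion and composition maps. That route can be made to work --- both arguments end up invoking Whitt's composition-continuity theorem at the same spot --- and yours is more quantitative (it would yield rates), whereas the paper's needs nothing beyond abstract uniqueness. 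However, two of your intermediate claims are incorrect as stated, one of them seriously.

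First, the exact identity you assert for the scheme --- that $C^n$ is the inverse of $\Id+a^n$ with $a^n$ the Skorohod reflection term of the sampled driver --- is false. The scheme pushes in quanta of $\gamma/n$ per mesh step and allows $Z^n$ to become strictly negative (in the paper's example following Theorem \ref{T:Euler}, $Z^n(k/n)=-1/n$ for odd $k$), which never happens for a Skorohod-reflected path. What is true is an approximate identity: by induction over mesh steps, the discrete push read in the clock scale stays within $\gamma/n$ of the Skorohod reflection term of $z_n+X^n$; the overshoots do not accumulate because the push has a running-maximum structure. Your sandwich between reflections of shifted drivers is exactly the right repair, so this is an overstatement rather than a fatal gap.

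Second, and this is the genuine gap: the ``single deterministic claim'' you reduce everything to is false as stated. Pathwise unbounded variation of the driver does not imply the conclusion; what the argument needs is that the reflected path spends zero Lebesgue time at $0$ (the hypothesis of Proposition \ref{P:existencia}), and this is not a pathwise consequence of unbounded variation --- a path can have unbounded variation on every interval while gliding along its running infimum on a fat Cantor set of positive measure. The paper obtains this property probabilistically, via the duality/fluctuation argument in the existence part of the proof of Theorem \ref{T:sLpasSolutiontoaTCE}, so it holds only on an almost-sure event, and your deterministic statement must carry it as a hypothesis (as Proposition \ref{P:aproximacion} does, by assuming that a unique solution exists). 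Relatedly, you attribute the wrong mechanism to unbounded variation: the discrete clock can never be trapped, with or without it, since every climb terminates after finitely many steps (the push accrues at rate $\gamma$ per unit of real time), and the scheme converges to the reflection construction even when the TCE has no solution at all --- that is precisely the paper's $X=-\Id$ example, where $C^n\to t/2$ and $Z^n\to 0$. The hypothesis is needed only to guarantee that this limit actually solves \eqref{E:cdt}, i.e., that it coincides with the pair $(Z,C)$ named in the statement.
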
 				  

Observe that the above procedure corresponds to an Euler-type approximation for the solution to the TCE \eqref{E:cdt}.
If we consider the same equation but now driven by a process for which we could not guarantee the existence of a solution,  our approximation scheme might converge but the limit might not be solution, as shown in the following simple but illustrative example.
Let $X=-\Id$, $z=0$ and $\gamma=1$. Then the approximations proposed in \eqref{E:discretizationC} and \eqref{E:discretizationZ} reduce to \[C^n\left({\frac{2k-1}{n}}\right)=C^n\left({\frac{2k}{n}}\right)=\frac{k}{n} \quad \text{ and } \quad Z^n\left(\frac{k}{n}\right)=\begin{cases} 0 & \text{ if k is even }\\
-\frac{1}{n} & \text{ if k is odd }
\end{cases}  \]
for each $k\in\N$. 
These sequences converge to $C^*(t)=t/2 $ and $Z^*=0$, but clearly such processes do not satisfy TCE \eqref{E:cdt}. In general, TCEs are very robust under approximations; the failure to converge is related to the fact that the equation that we just considered actually admits no solutions, as commented in a previous paragraph. 

Weak approximation results for sticky Brownian motion or of L\'evy processes of the sticky type have been given in \cite{MR1289173} and \cite{MR598937}. In the latter reference, reflecting Brownian motion is used, while in the former, an SDE representation is used. 
In \cite{MR4064533}, the reader will find an approximation of sticky Brownian motions by discrete space Markov chains and by diffusions in deep-well potentials as well a numerical study and many references regarding applications. 
In particular, we find there the following phrase which highlights why Theorem \ref{T:Euler} is surprising: \emph{... there are currently no methods to simulate a sticky diffusion directly: there is no practical way to extend existing methods for discretizing SDEs based on choosing discrete time steps, such as Euler-Maruyama or its variants ... to sticky processes...}
It is argued that the Markov chain approximation can be extended to multiple sticky Brownian motions. 
In the setting of multiple sticky Brownian motions, one can consult \cite{MR4161129} and \cite{MR3325271}. 
We are only aware of a strong approximation of sticky Brownian motion, in terms of  time-changed embedded simple and symmetric random walks, in \cite{MR1136247}.

		The rest of this paper is structured  as follows.
		We split the proof of Theorem \ref{T:sLpasSolutiontoaTCE} into several parts.
		In Section \ref{S:pathwise_analysis} we explore a deterministic version of the TCE \eqref{E:cdt}, 
		which is applied in Section \ref{S:monotonicity} to show a monotonicity property, the essential ingredient to show uniqueness and convergence of the proposed approximation scheme (Section \ref{S:Euler}). 
		In Section \ref{S:existence}, we obtain conditions for the existence of the unique solution to the deterministic version of the TCE \eqref{E:cdt}.
		The purpose of Section \ref{S:RandomExistenceUniquenessApproximation} is to apply the deterministic analysis to prove existence and uniqueness of the solution to the TCE \eqref{E:cdt} and the approximation Theorem \ref{T:Euler}. 
		Then in Section \ref{S:StrongMarkovProperty}, we verify that the unique process satisfying the TCE \eqref{E:cdt} is is measurable with respect to the time-changed filtration and that it is a sticky L\'evy process. 
		Finally in Section \ref{S:martingale_problem}, 
		using stochastic calculus instead of Theorem 2 from \cite{Ramirez2022}, 
		we analyze the boundary behavior of the solution to the proposed TCE to  describe the infinitesimal generator of a sticky L\'evy process.

	\section{Deterministic analysis}\label{S:pathwise_analysis}
	
		Following  the ideas from  \cite{MR3098685} and \cite{MR3689968}, 
		we start by considering a deterministic version of the TCE \eqref{E:cdt}. 
		

		We will prove that every solution to the corresponding equation satisfies a monotonicity property, which will be the key in the proof of uniqueness.
		Assume that   $Z$ solves almost surely the TCE \eqref{E:cdt}. 
		Hence,  its paths satisfy an equation of the type 
		\begin{equation}\label{E:fg}
			h(t)=f(c(t)) +g(t), \quad c(t)=\int_0^t \I( h(s)>0 )\, ds.
		\end{equation}where $f:[0,\infty)\to\R$ is a \cadlag function without negative jumps 
		starting at some non-negative value and $g$ is an non-decreasing \cadlag\ function. 
		(Indeed, we can take as $f$ a typical sample path of $t\mapsto z+X_t-\gamma t$ and $g(t)=\gamma t$.) 
		Recall that, $f$ being \cadlag, we can define the jump of $f$ at $t$, denoted $\Delta f(t)$, as $f(t)-f(t-)$. 
		By a solution to \eqref{E:fg}, we might refer either to the function $h$ (from which $c$ is immediately constructed), or to the pair $(h,c)$. 

		We first verify the non-negativity of the function $h$. 
		\begin{proposition}
            \label{P:nonNeg}
		    Let $f$ and $g$ be \cadlag\ and assume that $\Delta f\geq 0$, $g$ is non-decreasing and $f(0)+g(0)\geq 0$. 
		    Then, every solution  $h$ to the TCE \eqref{E:fg} is non-negative. 
            Furthermore, if $g$ is strictly increasing, 
            the function $c$ given by $c(t)=\int_0^t \I(h(s)>0)\, ds$ is also strictly increasing. 
		\end{proposition}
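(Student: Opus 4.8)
The plan is to exploit a simple feedback mechanism hidden in Equation~\eqref{E:fg}: whenever $h$ sits at or below zero, the indicator $\I(h>0)$ vanishes, so $c$ stops increasing; with $c$ frozen, $f(c(\cdot))$ is locally constant and $h$ can then move only through the non-decreasing term $g$, hence it cannot be pushed below zero. To turn this into a proof I would first record a structural fact that does all the real work: \emph{$h$ has no negative jumps}. Indeed, $c$ is Lipschitz (its integrand lies in $[0,1]$), in particular continuous and non-decreasing, so the composition $f\circ c$ is again \cadlag and, because a continuous time change can only turn an upward jump of $f$ into an upward jump or erase it, one has $\Delta(f\circ c)\ge 0$; combined with $\Delta g\ge 0$ this gives $\Delta h\ge 0$. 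Together with $h(0)=f(0)+g(0)\ge 0$, this means $h$ starts non-negative and can never jump downward, so any passage into the negative half-line must occur continuously, that is, through the value $0$.

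Next I would argue by contradiction. Assume $h(t_1)<0$ for some $t_1>0$ and set $\tau=\sup\{t\le t_1: h(t)\ge 0\}$, which is well defined since $0$ lies in the set and, as $h(t_1)<0$, one has $\tau<t_1$. Using the absence of negative jumps one checks that $h(\tau)\ge 0$: a left-approximating sequence with $h\ge 0$ forces $h(\tau-)\ge 0$, and then $h(\tau)=h(\tau-)+\Delta h(\tau)\ge 0$. On the other hand $h<0$ on $(\tau,t_1]$ by maximality of $\tau$, so right-continuity gives $h(\tau)\le 0$; hence $h(\tau)=0$. Now the feedback kicks in: since $h\le 0$ on $[\tau,t_1]$, the function $c$ is constant equal to $c(\tau)$ there, whence $h(t)=f(c(\tau))+g(t)=g(t)-g(\tau)\ge 0$ for $t\in[\tau,t_1]$ by monotonicity of $g$ (recall $f(c(\tau))=-g(\tau)$ because $h(\tau)=0$). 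In particular $h(t_1)\ge 0$, contradicting $h(t_1)<0$. This proves $h\ge 0$ everywhere.

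For the final assertion, suppose $g$ is strictly increasing but $c$ fails to be strictly increasing; by monotonicity and continuity this means $c$ is constant on some non-degenerate interval $[a,b]$. Then $\int_a^b\I(h>0)\,ds=c(b)-c(a)=0$, so $h\le 0$ Lebesgue-almost everywhere on $(a,b)$, and invoking the non-negativity just proved gives $h=0$ a.e.\ on $(a,b)$. On the other hand, $c$ being frozen on $[a,b]$ yields $h(t)=f(c(a))+g(t)$ there, which is strictly increasing because $g$ is; such a function attains the value $0$ at most once and therefore cannot vanish on a set of positive measure --- a contradiction. The only genuinely delicate point throughout is the regularity bookkeeping behind the claim that $h$ has no negative jumps: one must be sure that composing $f$ with the continuous time change $c$ neither creates a downward jump nor destroys the \cadlag property, and that the crossing time $\tau$ really satisfies $h(\tau)=0$ rather than merely $h(\tau-)=0$. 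Everything else is an elementary consequence of the monotonicity of $g$ and of the defining relation $c(t)=\int_0^t\I(h>0)\,ds$.
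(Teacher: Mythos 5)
Your proof is correct and takes essentially the same route as the paper's: locate a time $\tau$ at which $h$ hits $0$ before turning negative (possible because $h$, like $f$, has no negative jumps), observe that $c$ is frozen while $h\le 0$, so that $h=f(c(\tau))+g$ can only move upward with $g$, which yields the contradiction; the strict-increase claim for $c$ rests on the same frozen-$c$ observation in both treatments. The only notable difference is presentational: you explicitly verify that $h=f\circ c+g$ is \cadlag\ with $\Delta h\ge 0$ (a fact the paper asserts without proof), while your passing claim that $\tau<t_1$ follows ``as $h(t_1)<0$'' strictly needs the same no-negative-jumps argument you give a line later, so it is a matter of ordering rather than a gap.
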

		
		\begin{proof}
			Let $h$ be a solution to \eqref{E:fg} and suppose that it takes negative values. 
            Note that $h(0)=f(0)+g(0)\geq 0$ and that $h$ is \cadlag\ without negative jumps. 
			Hence,  $h$ reaches $(-\infty, 0)$  continuously. 
			The right continuity of $f$ (and then of $h$) ensures the existence of some non-degenerate interval on which $h$ is negative. 
			Fix $\ep>0$ small enough to ensure that $\tau$ defined by
			\[\tau=\inf\{t\geq 0:  h<0 \text{ on } (t,t+\ep)\}\]
			is finite.  (Note that, with this definition and the fact that $f$ decreases continuously, we have that $h(\tau)=0$. )
			Given that $h$ is negative on a right neighborhood of $\tau$, then  
			\[\int_0^\tau \I(h(s)>0)\, ds=\int_0^{\tau+\ep} \I(h(s)>0)\, ds,\]
			which leads us to a contradiction because  
			\[0 = h(\tau)= f\left(\int_0^\tau \I(h(s)>0)\, ds\right)+g(\tau) \leq f\left(\int_0^{\tau+\ep} \I(h(s)>0)\, ds\right)+ g(\tau+\ep)=h(\tau+\ep)<0. \] 
            Hence, $h$ is non-negative. 

            Assume now that $g$ is strictly increasing. 
            By definition, $c$ is non-decreasing. 
            We prove that $c$ is strictly increasing by contradiction: 
            assume that $c(t)=c(s)$ for some $s<t$. Then, $h=0$ on $(s,t)$ and, by working on a smaler interval, 
            we can assume that $h(s)=h(t)=0$. 
            However, we then get
            \[
            0= h(s)=f\circ c(s)+g(s)<f\circ c(s)+g(t)=f\circ c(t)+g(t)=h(t)=0. 
            \]The contradiction implies that $c$ is strictly increasing. 
		\end{proof}
		If $f_-(t)=f(t-)$, note that the above result and (a slight modification of) its proof also holds for solutions to the inequality
		\[
			\int_s^t \I(h(r)>0)\, dr\leq c(t)-c(s)\leq \int_s^t \I(h(r)\geq 0)\, dr
		\] where $h(r)=f_-\circ c(r)+g_-(r)$ and $f$ and $g$ satisfy the hypotheses of Proposition \ref{P:nonNeg}. 
		These inequalities are natural when studying the stability of solutions to \eqref{E:fg} and will come up in the proof of Theorem \ref{T:Euler}. 
%
%

	\subsection{Monotonicity and Uniqueness}\label{S:monotonicity}
		The following comparison result for the solutions to Equation \eqref{E:fg} 
		will be the key idea in the uniqueness proof of Theorem \eqref{T:sLpasSolutiontoaTCE}. 
		Moreover, we pick up it in Section \ref{S:Euler}, 
		where it also plays an essential role in the approximation of sticky L\'evy processes. 
		
		\begin{proposition}\label{P:monotonia}
			Let  $(f^1,g^1)$ and  $(f^2,g^2)$ be pairs of functions satisfying that $f^i$ and $g^i$ are \cadlag, $\Delta f^i\geq 0$, $g^i$ is strictly increasing and $f^i(0)+g^i(0)\geq 0$. 
			Suppose that $f^1\leq f^2$ and $g^1\leq g^2$.
			If $h^ 1$ and  $h^2$ satisfy   
			\[  h^i(t)=f^i(c^i(t))+g^i(t), \qquad c^i(t)=\int_0^t \I(h^i(s)>0)\, ds,\]
			for $i=1,2$, then we have the inequality $c^1\leq c^2$. 
			In particular, Equation \eqref{E:fg} admits has at most one solution when $g$ is strictly increasing. 
		\end{proposition}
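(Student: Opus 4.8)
The plan is to prove the comparison $c^1\le c^2$ by contradiction, localising at the first time the desired inequality would fail, and then to read off uniqueness by applying the comparison to a single pair $(f,g)$ against itself. First I would record the structural input from Proposition~\ref{P:nonNeg}: each $h^i\ge 0$, and since each $g^i$ is strictly increasing, each $c^i$ is continuous (in fact $1$-Lipschitz, being $c^i(t)=\int_0^t\I(h^i>0)\,ds$) and strictly increasing, with $c^i(0)=0$. Moreover $h^i(t)=f^i(c^i(t))+g^i(t)$ is right-continuous, because $c^i$ is continuous and $f^i,g^i$ are right-continuous.

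Suppose then that $c^1(t)>c^2(t)$ for some $t$ and set $\tau=\inf\{t\ge 0:c^1(t)>c^2(t)\}$. Continuity and $c^i(0)=0$ give $\tau<\infty$, $c^1(\tau)=c^2(\tau)=:\bar c$, and a sequence $t_n\downarrow\tau$ with $c^1(t_n)>c^2(t_n)$. Since the two clocks agree at $\tau$, the hypotheses $f^1\le f^2$ and $g^1\le g^2$ yield the pointwise comparison $h^1(\tau)=f^1(\bar c)+g^1(\tau)\le f^2(\bar c)+g^2(\tau)=h^2(\tau)$, and I split on the value of $h^1(\tau)$. If $h^1(\tau)>0$ then $h^2(\tau)\ge h^1(\tau)>0$, so by right-continuity both $h^i>0$ on some $[\tau,\tau+\delta)$; there both clocks increase at unit rate, whence $c^1=c^2$ on $[\tau,\tau+\delta)$, contradicting the $t_n$. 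If $h^1(\tau)=0<h^2(\tau)$, right-continuity gives $h^2>0$ on some $[\tau,\tau+\delta)$, so $c^2$ grows at unit rate while $c^1$ grows at rate $\le 1$; as $c^1(\tau)=c^2(\tau)$ this forces $c^1\le c^2$ on $[\tau,\tau+\delta)$, again contradicting the $t_n$.

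The remaining possibility, $h^1(\tau)=h^2(\tau)=0$ with both solutions parked at a common zero, is the crux and the step I expect to be the main obstacle. Here neither clock is compelled to move at unit rate, and the naive infinitesimal comparison is vacuous: on the level set $\{c^1=c^2\}$ the a.e.\ derivative of $c^1-c^2$ vanishes automatically, so it cannot by itself prevent $c^1$ from overtaking $c^2$. To handle it I would restart \eqref{E:fg} at $\tau$, replacing $f^i(\cdot)$ by $f^i(\bar c+\cdot)$ and $g^i(\cdot)$ by $g^i(\tau+\cdot)$; these preserve all hypotheses (the shifted $f^i$ are still \cadlag\ with non-negative jumps, the shifted $g^i$ still strictly increasing, and the new initial value equals $h^i(\tau)=0$), and they preserve the orderings $f^1\le f^2$, $g^1\le g^2$, reducing matters to a common zero at the origin. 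I would then pass to the inverse clocks $a^i=(c^i)^{-1}$, which are continuous and strictly increasing since each $c^i$ is, and write $\ell^i(u)=a^i(u)-u\ge 0$ for the time spent at $0$. The pair $(H^i,\ell^i)$ with $H^i(u)=f^i(u)+g^i(u+\ell^i(u))\ge 0$ then solves a Skorokhod-type reflection problem in which $\ell^i$ increases only when $H^i=0$; because $f^1\le f^2$ and $g^1\le g^2$ make the first driver the lower one, it must accrue at least as much reflection, $\ell^1\ge\ell^2$, which is precisely $a^1\ge a^2$, i.e.\ $c^1\le c^2$ near $\tau$. It is exactly here that the strict monotonicity of $g$ (which forces both solutions off $0$) and the absence of negative jumps of $f$ are used, and I expect the rigorous justification of this reflection comparison — equivalently, the statement that the smaller data spends at least as much time at $0$ — to be where the real work lies.

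Once the comparison $c^1\le c^2$ is established, uniqueness is immediate. If $(h,c)$ and $(\tilde h,\tilde c)$ both solve \eqref{E:fg} for the same \cadlag\ $f$ with $\Delta f\ge 0$ and the same strictly increasing $g$, I apply the comparison with $(f^1,g^1)=(f^2,g^2)=(f,g)$ in both orders to obtain $c\le\tilde c$ and $\tilde c\le c$, hence $c=\tilde c$; then $h=f\circ c+g=f\circ\tilde c+g=\tilde h$, so the solution is unique.
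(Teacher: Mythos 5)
Your argument is incomplete at exactly the point you flag as the crux: the case $h^1(\tau)=h^2(\tau)=0$ at the first crossing time is never actually resolved. The reflection reformulation you sketch does not close it, and for a structural reason: after passing to the inverse clocks, the object $H^i(u)=f^i(u)+g^i\paren{u+\ell^i(u)}$ is \emph{not} a standard Skorohod problem, because the compensator $\ell^i$ enters inside $g^i$ rather than additively. So the explicit formula $l(t)=\sup_{s\leq t}(-f(s)\vee 0)$ of Lemma \ref{L:existence} and the usual comparison principle for reflections of ordered drivers are unavailable; the driver of system $1$ depends on $\ell^1$ itself. If you try to prove your claimed comparison $\ell^1\geq\ell^2$ by a first-crossing argument, you land again at a time where $H^1=H^2=0$ and both compensators are free to increase --- the same doubly-parked configuration you started with, now in dual form. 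The claim ``the smaller data accrues at least as much reflection'' is essentially the proposition itself, so deferring it leaves a genuine gap, not a routine verification.

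The paper avoids this case altogether with an $\ep$-shift: it compares $c^1(\cdot)$ not with $c^2(\cdot)$ but with $c^{\ep}(\cdot)=c^2(\ep+\cdot)$, and sets $\tau=\inf\{t>0:c^1(t)>c^{\ep}(t)\}$. At a finite $\tau$ one has $c^1(\tau)=c^{\ep}(\tau)$, and now the strict monotonicity of $g^2$ gives $g^1(\tau)\le g^2(\tau)<g^2(\ep+\tau)$, hence
\[
0\leq h^1(\tau)=f^1\paren{c^1(\tau)}+g^1(\tau)<f^2\paren{c^{\ep}(\tau)}+g^2(\ep+\tau)=h^2(\ep+\tau),
\]
a \emph{strict} inequality. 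So $h^2(\ep+\cdot)$ is strictly positive on a right neighborhood of $\tau$, $c^{\ep}$ grows there at unit rate, and since $c^1$ is $1$-Lipschitz no overtaking can occur --- contradiction. Thus $c^1\leq c^2(\ep+\cdot)$ for every $\ep>0$, and letting $\ep\to 0$ (using continuity of $c^2$) gives $c^1\leq c^2$. In short, the shift converts the weak ordering of the data into strict positivity of $h^2$ at the crossing time, which is precisely what eliminates the case your proof cannot handle. Your treatment of the two easy cases and your deduction of uniqueness from the comparison (applied twice) are both fine and agree with the paper; what is missing is this one idea, and it is the whole content of the proof.
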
  
		

		\begin{proof}
			Fix  
			$\ep>0$ and define 
			$c^{\ep}(t)=c^2(\ep+ t)$. 
			Set  \[\tau=\inf \{t>0:c^1(t)>c^{\ep}(t)\}.\]
			To  get a contradiction, suppose that $\tau<\infty$. 
			The continuity of $c^1$ and $c^{\ep}$ guarantees that $c^1(\tau)=c^{\ep}(\tau)$ and   $c^1$ is bigger than $c^{\ep}$ at some point  $t$ of every right neighborhood of  $\tau$. 
			At such points, the inequality $c^{\ep}(t)-c^{\ep}(\tau)<c^1(t)-c^1(\tau)$ is satisfied. 
			Applying a change of variable, this is equivalent to 
			\begin{equation}\label{E:inequality}
				\int_\tau^t \I(h^2(\ep+s)>0)\, ds <\int_\tau^t \I(h^1(s)>0)\, ds.
			\end{equation}
			The assumpions  about $g^1$ and $g^2$ imply that $g^1(\tau)<g^2(\ep+\tau)$.	
			Therefore
			\[0\leq h^1(\tau)=f^1 (c^1(\tau))+g^1(\tau) < f^2 (c^{\ep}(\tau))+ g^2(\ep+\tau)=h^2(\ep+\tau).\]
			Thanks to the right continuity of $h^2$, we can choose $t$ close enough to $\tau$ such that 
			$h^2(\ep+ s)>0$ for every $s\in[\tau, t)$. 
			Going back to the inequality \eqref{E:inequality}, 
			we see that
			\[
			t-\tau= \int_\tau^t \I(h^2(\ep+s)>0)\, ds <\int_\tau^t \I(h^1(s)>0)\, ds
			\leq t-\tau, 
			\]which is a contradiction. 
			Therefore $\tau=\infty$ and we conclude the announced result by letting  
			$\ep\to 0$. 

            In particular, if $(h^1,c^1)$ and $(h^2,c^2)$
            are two solutions to \eqref{E:fg} (driven by the same functions $f$ and $g$), 
            then the above monotonicity result (applied twice) implies $c^1=c^2$ and therefore $h^1=f\circ c^1+g=f\circ c^2+g=h^2$. 
		\end{proof}

	\subsection{Existence}\label{S:existence}
		
		The following variant of a well-known result of Skorohod (cf. \cite[Chapter VI, Lemma 2.1]{MR1725357}) will be helpful to verify the existence of the unique solution to the TCE \eqref{E:fg}.
		\begin{lemma}\label{L:existence}
			Let $f:[0,\infty)\to \R$ be a  \cadlag function with non-negative jumps and  $f(0)\geq 0$. 
			Then there exists a unique pair of functions $(r, l)$  defined on $[0,\infty)$ which  satisfies: $r=f+l$, $r$ is non-negative,
			$l$ is a non-decreasing continuous function that increases only   on the set $\{s : r(s) =0\}$ and such that $l(0)=0$.
			Moreover, the function $l$ is given by \[l(t)=\sup_{s\leq t}(-f(s)\vee 0).\]
		\end{lemma}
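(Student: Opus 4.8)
The plan is to handle existence and the explicit formula simultaneously, by taking the proposed $l$ as an ansatz and verifying the four requirements, and then to dispatch uniqueness through the classical ``square of the difference'' argument, which is available because the difference of two solutions turns out to be continuous.

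For existence, I would set $l(t)=\sup_{s\le t}(-f(s)\vee 0)$ and $r=f+l$. Monotonicity of $l$ and the normalization $l(0)=0$ (using $f(0)\ge 0$) are immediate from the formula, and non-negativity of $r$ follows from $l(t)\ge -f(t)$, so that $r(t)=f(t)+l(t)\ge 0$. Writing $M(t)=\sup_{s\le t}(-f(s))$, so that $l=M\vee 0$, the key point --- and the only place the hypothesis $\Delta f\ge 0$ is used --- is the continuity of $l$: since $f$ has non-negative jumps, $-f$ has no upward jumps, and the running supremum $M$ of a \cadlag function without upward jumps is continuous, because at any time $M(t)=M(t-)\vee(-f(t))$ with $-f(t)\le -f(t-)\le M(t-)$. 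Composing with the continuous map $x\mapsto x\vee 0$ gives continuity of $l$, and hence $r=f+l$ is \cadlag.

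It remains to check that $l$ increases only on $\{r=0\}$. I would argue that $M$ (and hence $l$) can increase only at times $t$ where $-f$ attains its running maximum, i.e. $M(t)=-f(t)$: at any right-increase time of $M$ one finds points $s\downarrow t$ with $-f(s)$ approaching $M(t)$, and right-continuity forces $-f(t)=M(t)$; at such a time $r(t)=f(t)+l(t)=0$. Turning this into the statement that $dl$ does not charge $\{r>0\}$ is then a short covering argument: if $r(t_0)>0$, right-continuity of $r$ gives an interval $[t_0,t_0+\eta)$ on which $r>0$, and the previous observation shows $l$ must be constant there.

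For uniqueness, suppose $(r_1,l_1)$ and $(r_2,l_2)$ both solve the problem. Since they share the same $f$, the difference $r_1-r_2=l_1-l_2$ is continuous and of bounded variation, so the Stieltjes chain rule gives
\[
(r_1(t)-r_2(t))^2=2\int_0^t(r_1-r_2)\,dl_1-2\int_0^t(r_1-r_2)\,dl_2 .
\]
On the support of $dl_1$ we have $r_1=0$, hence $r_1-r_2=-r_2\le 0$, and symmetrically the second integrand is $\le 0$ on the support of $dl_2$; both integrals are therefore non-positive, forcing $r_1\equiv r_2$ and then $l_1\equiv l_2$. The main obstacle is really the continuity of $l$, which is exactly what pins down the need for the hypothesis $\Delta f\ge 0$; once that and the support property are in hand, non-negativity, the explicit formula, and uniqueness are routine.
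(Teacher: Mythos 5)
Your proposal is correct, but there is in fact no paper proof to compare it with: the paper states Lemma~\ref{L:existence} without proof, presenting it as a variant of Skorohod's reflection lemma and citing \cite[Chapter VI, Lemma 2.1]{MR1725357}, adding only the remark that the absence of negative jumps is what makes $l$ continuous. Your write-up is a valid, self-contained proof of precisely the c\`adl\`ag variant the paper needs. The existence half (take the explicit formula as ansatz, get continuity of the running supremum from $\Delta f\ge 0$, then check the support property) is the standard route. For uniqueness you use the ``square of the difference'' identity for continuous functions of bounded variation; this is genuinely different from the classical first-separation argument, which runs: if $l_1(t)<l_2(t)$, set $g=\sup\{s\le t:l_1(s)\ge l_2(s)\}$, note that on $(g,t]$ one has $r_2=r_1+(l_2-l_1)>0$, so $l_2$ is flat on $[g,t]$ and $l_2(t)=l_2(g)\le l_1(g)\le l_1(t)$, a contradiction. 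The latter is the style of argument the paper itself favors (compare the proof of Proposition~\ref{P:monotonia}); your quadratic argument is the one that scales to multidimensional reflection problems, at the cost of needing exactly the two ingredients you supply, namely continuity of $l_1-l_2$ and the fact that $dl_i$ does not charge $\{r_i>0\}$.

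One wording slip is worth fixing. You claim ``$M$ (and hence $l$) can increase only at times $t$ where $M(t)=-f(t)$\,\dots\ at such a time $r(t)=f(t)+l(t)=0$.'' The identity $M(t)=-f(t)$ at increase times of $M$ is right, but the conclusion $r(t)=0$ is false for $M$ in general: if $f(t)=1-t$, then $M(t)=t-1$ increases from time $0$ while $l\equiv 0$ and $r(t)=1-t>0$ on $[0,1)$. The conclusion holds only for $l=M\vee 0$: an increase of $l$ at $t$ additionally forces $M(t)\ge 0$ (otherwise $M<0$, hence $l\equiv 0$, near $t$ by continuity), and only then does $l(t)=M(t)=-f(t)$ give $r(t)=0$. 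Since your covering argument invokes the statement only for $l$, this is a local imprecision rather than a gap, but the case $M(t)<0$ should be spelled out, as it is the one place where the truncation $\vee\,0$ in the formula for $l$ actually matters.
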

		Note that the lack of negative jumps of $f$ is fundamental to obtain a continuous process $l$. 
		
		With the above Lemma, we can give a deterministic existence result for equation \eqref{E:fg}. 
		\begin{proposition}\label{P:existencia}
		Assume that $f$ is \cadlag, $\Delta f\geq 0$ and $f(0)\geq 0$. 
		Let $(r,l)$ be the pair of processes of Lemma \ref{L:existence}  applied to $f$. 
		If $\{t\geq 0: r(t)=0\}$ has Lebesgue measure zero, 
		then, for every $\gamma>0$ there exists a solution $h$ to
		\begin{equation}\label{E:fgamma}
		    h=f\left(\int_0^t \I(h(s)>0) \, ds\right)+\gamma\int_0^t \I(h(s)=0)\, ds.
		\end{equation}
		\end{proposition}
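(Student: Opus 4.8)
The plan is to build an explicit solution by time-changing the Skorohod reflection $r$ of $f$ furnished by Lemma \ref{L:existence}. The reflected path $r=f+l$ already has the right shape away from zero, while the continuous term $l$, which grows only on $\{r=0\}$, measures precisely how much extra ``sticky'' clock time should be spent at the origin; scaling that insertion by $1/\gamma$ motivates the ansatz. Concretely, I would set
\[
\tau(u)=u+\frac1\gamma\, l(u),\qquad u\geq 0,
\]
which is continuous and strictly increasing with $\tau(0)=0$ and $\tau(u)\geq u\to\infty$, hence a homeomorphism of $[0,\infty)$ onto itself. Define $c=\tau^{-1}$ and $h=r\circ c$. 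Then $c$ is continuous, strictly increasing and satisfies $c(t)\leq t$; since $c$ is continuous and monotone while $r$ is \cadlag\ and non-negative, $h=r\circ c$ is again \cadlag\ and non-negative (right-continuity from right-continuity of $r$, left limits from $c(t-)=c(t)$ and existence of $r(c(t)-)$).

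Next I would reduce the whole proposition to a single time-change identity. Inverting $\tau$ gives $t=\tau(c(t))=c(t)+\frac1\gamma l(c(t))$, so $l(c(t))=\gamma\,(t-c(t))$. Combined with $h(t)=r(c(t))=f(c(t))+l(c(t))$, this already yields
\[
h(t)=f(c(t))+\gamma\,(t-c(t)).
\]
Because $h\geq 0$, we have $t-c(t)=\int_0^t[1-\I(h(s)>0)]\,ds=\int_0^t\I(h(s)=0)\,ds$ as soon as $c(t)=\int_0^t\I(h(s)>0)\,ds$. Hence establishing this last equality is enough: it simultaneously identifies $c$ with the integral appearing in \eqref{E:fgamma} and turns the displayed formula into exactly \eqref{E:fgamma}.

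The key step, which I expect to be the main (if short) obstacle, is the verification that $c(t)=\int_0^t\I(h(s)>0)\,ds$. I would change variables via $s=\tau(u)$, equivalently pushing the Lebesgue measure $ds$ on $[0,t]$ forward under $c$, whose image is the Lebesgue--Stieltjes measure $d\tau$ on $[0,c(t)]$; since $h(s)>0\iff r(c(s))>0$, the integrand becomes $\I(r(u)>0)$ and
\[
\int_0^t\I(h(s)>0)\,ds=\int_0^{c(t)}\I(r(u)>0)\,d\tau(u)=\int_0^{c(t)}\I(r(u)>0)\,du+\frac1\gamma\int_0^{c(t)}\I(r(u)>0)\,dl(u).
\]
Here the two hypotheses carry the argument. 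Because $l$ increases only on $\{r=0\}$ (Lemma \ref{L:existence}), the $dl$-integral vanishes; and because $\{u:r(u)=0\}$ has Lebesgue measure zero by assumption, the $du$-integral equals $c(t)$. This gives $c(t)=\int_0^t\I(h(s)>0)\,ds$, and therefore $h(t)=f(c(t))+\gamma\int_0^t\I(h(s)=0)\,ds$, so that $h$ solves \eqref{E:fgamma}.

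The only points requiring care are routine: justifying the change-of-variables identity $\int_0^t\psi(c(s))\,ds=\int_0^{c(t)}\psi(u)\,d\tau(u)$ for the bounded Borel integrand $\psi=\I(r>0)$ (which follows from $c=\tau^{-1}$ and the fact that the pushforward of $ds$ under $c$ is $d\tau$), and confirming the \cadlag\ property of $h=r\circ c$ as above. Note that no fixed-point or self-consistency argument is needed, since the ansatz makes $c$ explicit rather than implicit; uniqueness, when $g$ is strictly increasing, is already provided by Proposition \ref{P:monotonia}.
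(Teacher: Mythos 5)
Your proof is correct and takes essentially the same route as the paper's: the identical ansatz $\tau(u)=u+l(u)/\gamma$ (the paper's $a$), with $c=\tau^{-1}$ and $h=r\circ c$, followed by the same change of variables that splits $d\tau$ into $du+\gamma^{-1}dl$ and invokes the two hypotheses (Lebesgue-null zero set of $r$, and $l$ growing only on $\{r=0\}$) to identify $c(t)=\int_0^t \I(h(s)>0)\,ds$. The only differences are in the ordering of steps and the explicit (welcome, but routine) verification of the \cadlag\ property of $h$.
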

		Equivalently, in terms of Equation \eqref{E:fg}, the function $h$ satisfies
		\begin{equation}\label{E:fgammaIdentidad}
			h=f^\gamma\circ c+\gamma\Id, \quad c(t)=\int_0^t \I( h(s)>0 )\, ds.
		\end{equation}
		where $f^\gamma(t)=f(t)-\gamma t$. 
		
		\begin{proof}
		Applying Lemma \ref{L:existence} to $f$, 
		we deduce the existence of a unique pair of processes $(r, l)$ satisfying $r(t)=f(t)+l(t)$ with $r$ is a non-negative function and $l$ a continuous function with non-decreasing paths such that $l(0)=0$ and
		\begin{equation}\label{E:zerosL}
			\int_0^t \I(r(s)>0) \, l(ds)=0.
		\end{equation}
		To construct the solution to the deterministic TCE \eqref{E:fgamma}, let us consider the continuous and strictly increasing function $a$ defined by $a(t)=t+l(t)/\gamma $ for every $t\geq 0$.
		Denote  its inverse by $c$ and consider the composition $h=r\circ c$. The hypothesis on $f$ implies that $\int_0^t \I(r(s)=0)\, ds=0$ for all $t$.  
		Therefore, since $r$ is non-negative, then
		\[t=\int_0^t \I(r(s)>0)\, ds=\int_0^t \I(r(s)>0)\, (ds+\gamma^{-1} l(ds)). \]
		Substituting the deterministic time $t$ for $c(t)$ in the previous expression and using that $c$ is the inverse of $a$, we have  
		\[c(t)=\int_0^{c(t)} \I(r(s)>0)\,  a(ds)=\int_0^t \I(h(s)>0)\, ds.\]
		Finally, the definition of $a$ and its continuity imply  $l(t)=\gamma(a(t)-t)$, so that
		\[l(c(t))=\gamma(t-c(t))=\gamma\int_0^t \I(h(s)=0)\, ds.\]
		Hence, the identity   $h(t)=r(c(t))$ can be written as 
		\[h(t)= f\left(\int_0^t \I(h(s)>0)\, ds\right)+\gamma \int_0^t \I(h(s)=0)\, ds,\] 
		as we wanted.
		\end{proof}


\subsection{Approximation}	
\label{S:Euler}

		It is our purpose now to discuss a simple method to approximate the solution to the TCE \eqref{E:fgamma}.
		Among the large number of existing discretization schemes,  
		we choose a widely used method, an adaptation of that of Euler's.
		Again, the key to the proof relies deeply on our monotonicity result. 

		\begin{proposition}
		\label{P:aproximacion}
		Let $f$ be \cadlag and satisfy $\Delta f\geq 0$, and $f(0)\geq 0$. 
		Assume that Equation \eqref{E:fgamma}, or equivalently \eqref{E:fgammaIdentidad}, 
		admits a unique solution denoted by $(h,c)$. 
		Let $\tilde f^n$ be a sequence of \cadlag\ functions which converge to $f$  and let $f^n=\tilde f^n-\gamma \floor{n\cdot}/n$. 
		Let $c^n$ and $h^n$ be given by $c^n(0)=c^n(0-)=0$,
		\begin{linenomath}
		\begin{align}
		    \label{E:approxfgammaCum}
		    c^{n}(t)&=c^n(\floor{nt}/n-)+(t-\floor{nt}/n) \I(h^n(t)>0)
		    \intertext{and}
		    \label{E:approxfgamma}
		    h^n(t)&= f^n (c^n(\floor{nt}/n))+\gamma\floor{nt}/n. 
		\end{align}
		\end{linenomath}
		Then $h^n\to h$ in the Skorohod $J_1$ topology and $c^n\to c$ uniformly on compact sets. 
		\end{proposition}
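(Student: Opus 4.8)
The plan is to combine a compactness argument for the time changes $c^n$ with the uniqueness machinery already developed, the only twist being that the subsequential limits will a priori solve not \eqref{E:fgammaIdentidad} itself but the pair of inequalities flagged right after Proposition~\ref{P:nonNeg}. First I would record the structural features of the scheme. Since the increment $(t-\floor{nt}/n)\I(h^n(t)>0)$ vanishes at the grid points and $h^n$ is constant on each interval $[k/n,(k+1)/n)$, the recursion \eqref{E:approxfgammaCum} shows that $c^n$ is continuous, non-decreasing, $1$-Lipschitz and in fact satisfies the time-change relation $c^n(t)=\int_0^t \I(h^n(s)>0)\,ds$ \emph{exactly}; moreover, writing $\hat f^n:=\tilde f^n-\gamma\Id$, Equation \eqref{E:approxfgamma} reads $h^n(t)=\hat f^n(c^n(\floor{nt}/n))+\gamma\floor{nt}/n$, where $\hat f^n\to f^\gamma$. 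Because the $c^n$ are uniformly Lipschitz and uniformly bounded on compacts, Arzel\`a--Ascoli together with a diagonal argument yields a subsequence along which $c^n\to c^*$ uniformly on compacts, with $c^*$ non-decreasing and $1$-Lipschitz.

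Next I would identify the limit. Setting $h^*:=f^\gamma\circ c^*+\gamma\Id$, the convergence $\hat f^n\to f^\gamma$, the uniform convergence $c^n(\floor{n\cdot}/n)\to c^*$ and the continuity of $c^*$ give $h^n(t)\to h^*(t)$ at every $t$ for which $c^*(t)$ is a continuity point of $f^\gamma$, hence for a.e.\ $t$. Passing to the limit in the exact relation $c^n(t)-c^n(s)=\int_s^t\I(h^n(r)>0)\,dr$ is the delicate point: at a.e.\ $r$ one has $\I(h^*(r)>0)\le\liminf_n\I(h^n(r)>0)$ and $\limsup_n\I(h^n(r)>0)\le\I(h^*(r)\ge0)$, so Fatou and its reverse (both integrands dominated by $1$ on $[s,t]$) give
\[
\int_s^t\I(h^*(r)>0)\,dr\;\le\;c^*(t)-c^*(s)\;\le\;\int_s^t\I(h^*(r)\ge0)\,dr .
\]
Thus $(h^*,c^*)$ is precisely a solution of the inequalities discussed after Proposition~\ref{P:nonNeg}, with driving data $f^\gamma$ and $\gamma\Id$.

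Finally I would invoke uniqueness. By the extension of Proposition~\ref{P:nonNeg} to such inequality solutions one gets $h^*\ge0$, and by the corresponding extension of the comparison Proposition~\ref{P:monotonia} any solution of these inequalities must coincide with the unique solution of \eqref{E:fgammaIdentidad}; hence $c^*=c$ and $h^*=h$. Since every uniformly convergent subsequence of $(c^n)$ has the same limit $c$, the whole sequence converges, i.e.\ $c^n\to c$ uniformly on compacts. The convergence $h^n\to h$ in the $J_1$ topology then follows from the identity $h^n=\hat f^n\circ\big(c^n(\floor{n\cdot}/n)\big)+\gamma\floor{n\cdot}/n$, using that $\hat f^n\to f^\gamma$, that $c^n(\floor{n\cdot}/n)\to c$ uniformly with $c$ continuous and strictly increasing (Proposition~\ref{P:nonNeg}), and that $\gamma\floor{n\cdot}/n\to\gamma\Id$ uniformly, so that the composition is continuous for $J_1$.

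The main obstacle is exactly the passage to the limit through the discontinuous indicator $\I(\cdot>0)$: because $h^n$ may straddle $0$ (as it does in the $X=-\Id$ example, where $h^n=-1/n$), the limiting relation degrades from an equality to the two-sided inequality above, and the entire argument hinges on having the inequality version of the monotonicity and non-negativity results available. A secondary technical point, absorbed into the ``for a.e.\ $t$'' statement, is to establish the pointwise convergence $h^n\to h^*$ off a Lebesgue-null set of times, namely to show that the evaluation points $c^*(t)$ meet the (countable) jump set of $f^\gamma$ only on a null set of $t$; this is where the lack of negative jumps of $f$ and the control of the flat stretches of $c^*$ enter.
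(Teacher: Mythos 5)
Your overall architecture (Arzel\`a--Ascoli compactness for $c^n$, identification of subsequential limits through limiting inequalities, then the uniqueness/comparison machinery, then continuity of composition for the $J_1$ convergence of $h^n$) is the same as the paper's, but there is a genuine gap at the identification step, and it is not the ``secondary technical point'' you defer to the end --- it is the crux. Your claim that $h^n(t)\to h^*(t)$ ``for a.e.\ $t$'' requires that $\{t:\,c^*(t)\in J\}$ be Lebesgue-null, where $J$ is the jump set of $f^\gamma$; for a general non-decreasing $1$-Lipschitz limit $c^*$ this can fail, because a flat stretch of $c^*$ at a level belonging to $J$ produces a whole \emph{interval} of times $t$ at which $c^n(\floor{nt}/n)$ approaches a jump point of $f^\gamma$, and there $f^n(c^n(\floor{nt}/n))$ may oscillate between $f^\gamma_-(c^*(t))$ and $f^\gamma(c^*(t))$. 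Ruling out such flat stretches means proving that $c^*$ is strictly increasing --- but in your plan strict increase is supposed to come out of the limiting inequalities, which you in turn derive from the a.e.\ convergence: the argument is circular. Concretely, your left inequality $\int_s^t\I(h^*(r)>0)\,dr\le c^*(t)-c^*(s)$, with $h^*=f^\gamma\circ c^*+\gamma\Id$ (right limits), is not available at this stage: at a flat point with $f^\gamma_-\circ c^*(r)+\gamma r<0<f^\gamma\circ c^*(r)+\gamma r$ one may have $\liminf_n\I(h^n(r)>0)=0$ while $\I(h^*(r)>0)=1$. What does hold at \emph{every} $r$, since all accumulation points of $f^n(t_n)$ lie in $\{f^\gamma_-(t),f^\gamma(t)\}$ when $t_n\to t$, is only the weaker bound with $f^\gamma_-\circ c^*+\gamma\Id$ inside the first indicator. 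This is exactly why the remark following Proposition~\ref{P:nonNeg} is phrased with left limits: the paper first derives the left-limit inequality by Fatou, applies that extension to conclude that $c^*$ is strictly increasing, then invokes the continuity of the composition map at $(f^\gamma,c^*)$ to get $h^{n_j}\to h^*$ in $J_1$ (hence a.e.), and only then obtains your inequality with $f^\gamma$ and runs the comparison. Your proof becomes correct if reordered along these lines.

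A smaller omission: for the reverse bound $c^*\le c$ you appeal to ``the corresponding extension of the comparison Proposition~\ref{P:monotonia}'' to inequality solutions, but no such extension is stated in the paper, and the proof of Proposition~\ref{P:monotonia} does not go through verbatim when one of the pairs only satisfies an inequality: the final contradiction needs the extra input that $c^*$ is $1$-Lipschitz, i.e.\ $c^*(t)-c^*(\tau)\le t-\tau$, which the paper singles out as ``a fundamental fact''. You record the Lipschitz property in your first paragraph but never use it where it is actually needed.
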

		
		Note that Propositions \ref{P:monotonia} and  \ref{P:existencia} give us conditions for the existence of a unique solution, 
		which is the main assumption in the above proposition. 
		Also, $h^n$ is piecewise on $[(k-1)/n,k/n)$ and, therefore, 
		$c^n$ is piecewise linear on $[(k-1)/n,k/n]$ and, 
        at the endpoints of this interval,  
		$c^n$ takes values in $\mathbb{N}/ n$. 
  Hence, $c^n(\floor{tn}/n)=\floor{nc^n(t)/n}$. 
		
		The proof of Proposition \ref{P:aproximacion} is structured as follows: 
		we prove that the sequence $(c^n, n\geq 1)$ is relatively compact.
		Given  $(c^{n_j}, j\geq 1)$ a subsequence that converges to certain limit $c^*$, we see that $((c^{n_j},h^{n_j}), j\geq 1)$ also converges and its limit is given by $(c^*,h^*)$, where $h^*=f^\gamma\circ c^*+\gamma\Id$ and we recall that $f^\gamma=f-\gamma\Id$. 
		A slight modification of the proof of Proposition \ref{P:monotonia} implies that the limit $(c^*, h^*)$ does not depend on the choice of the subsequence $(n_j,j\geq 1)$ and consequently the whole sequence $((c^n,h^n),n\geq1)$ converges.
		
		\begin{proof}[Proof of Proposition \ref{P:aproximacion}]
		Since $\gamma\Id$ is continuous, then our hypothesiss $\tilde f^n\to f$ implies that $f^n\to f-\gamma\Id$. (Since addition is not a continuous operation on Skorohod space as in \cite[Ex. 12.2]{MR1700749}, we need to use Theorem 4.1 in \cite{MR561155} or Theorem 12.7.3 in \cite{MR1876437}.)
		
		Fix $t_0>0$. 
		Note that Equation \eqref{E:approxfgammaCum} can be written as
		\[
		    c^n(t)=\int_0^t \I(h^n(s)>0)\, ds. 
		\]
		This guarantees that the functions $c^n$ are Lipschitz continuous with Lipschitz constant equal to $1$.
		Hence they are non-decreasing, equicontinuous and uniformly bounded on $[0,t_0]$.
		It follows  from  Arzel\`a-Ascoli Theorem that $(c^n, n\geq 1)$ is relatively compact. 
		Let $(c^{n_j}, j\geq 1)$ be a subsequence which converges uniformly in the space of continuous function on $[0,t_0]$, let us call  $c^*$ to the limit, which is non-decreasing and continuous. 
		Actually, $c^*$ is $1$-Lipschitz continuous, so that $c^*(t)-c^*(s)\leq t-s$ for $s\leq t$. 
		This is a fundamental fact which will be relevant to proving that $c=c^*$. 
		Since $c^{n_j}(\floor{{n_j}t}/{n_j})=\floor{{n_j}c^{n_j}(t)}/{n_j}$ for every $t\geq 0$,   we can write $h^{n_j}=f^{n_j}\circ c^{n_j}+ \gamma \floor{{n_j} \cdot}/{n_j}$.  
        We now prove that: 
        as $j\to\infty$: 
        $( c^{n_j}, f^{n_j}\circ c^{n_j})\to ( c^*, f^\gamma\circ c^*)$. 
        Indeed, the convergence $f^n\to f^\gamma$ implies that $\liminf_{n\to\infty}f^n(t_n)\geq f^\gamma_-(t)$ whenever $t_n\to t$. 
        (If a proof is needed, 
        note that Proposition 3.6.5 in \cite{MR838085} tells us that the accumulation points of $f^n(t_n)$ belong to $\{f^\gamma_-(t), f^\gamma(t)\}$.) 
        Then,
        \[
            I(f^\gamma_-\circ c^*(s)+\gamma s>0)
            \leq \liminf_j\,  \I(f^{n_j}\circ c^{n_j}(s)+\gamma\floor{ns}/n >0),
        \]so that, by Fatou's lemma, 
        \[
            \int_s^t \I(f^\gamma_-\circ c^*(r)+\gamma r>0)\, dr
            \leq c^*(t)-c^*(s). 
        \]But now, arguing as in Proposition \ref{P:nonNeg}, we see that $f^\gamma_-\circ c^*+\gamma\Id$ is non-negative and that $c^*$ is strictly increasing. Since $c^*$ is continuous and stricly increasing, Theorem 13.2.2 in \cite[p. 430]{MR561155} implies that the composition operation is continuous at $(f^\gamma,c^*)$, so that $f^{n_j}\circ c^{n_j}\to f^\gamma\circ c^*$. Since $\gamma\Id$ is continuous, we see that $h^{n_j}\to h^*:=f^\gamma\circ c^*+\gamma\Id$, as asserted. 
        
        Another application of Fatou's lemma gives
        \[
            \int_s^t \I(f^\gamma\circ c^*(r)+\gamma r>0)\, dr\leq c^*(t)-c^*(s). 
        \]Now, arguing as in the monotonicity result of Proposition \ref{P:monotonia}, we get $c\leq c^*$. 

        Let us obtain the converse inequality $c^*\leq c$ by a small adaptation of the proof of the aforementioned proposition, which then finishes the proof of Theorem \ref{T:Euler}. 
        Let $\ep>0$,  define $\tilde c(t)=c(\ep+t)$ and let $\tau=\inf\{t\geq 0: c^*(t)>\tilde c(t)\}$. 
        If $\tau<\infty$, note that $c^*(\tau)=\tilde c(\tau)$ and, 
        in every right neighborhood of $\tau$, 
        there exists $t$ such that $c^*(t)>\tilde c(t)$. 
        At $\tau$, observe that\[
        		0\leq h^*(\tau)=f^\gamma\circ c^*(\tau)+\gamma\tau<f^\gamma\circ \tilde c(\tau)+\gamma (\tau+\ep)=h(\tau+\ep). 
        \]Thanks to the right continuity of the right hand side, there exists a right neighborhood of $\tau$ on which $h(\cdot +\ep)$ is strictly positive and on which, by definition of $c$,  $\tilde c$ grows linearly. Let $t$ belong to that right-neighborhood and satisfy $c^*(t)>\tilde c(t)$. Since $c^*$ is $1$-Lipschitz continuous, we then obtain the contradiction:
        \[
        		(t-\tau)=\int_\tau^t \I(h(\ep+r)>0)\, dr=\tilde c(t)-\tilde c(\tau)<c^*(t)-c^*(\tau)\leq t-\tau. 
        \]Hence, $\tau=\infty$ and therefore $c^*\leq \tilde c$. Since this inequality holds for any $\ep>0$, we deduce that $c^*\leq c$. 

		The above implies that  $c^*=c$ and consequently  $h^*=h$. 
		In other words, the limits $c^*$ and $h^*$ do not depend on the subsequence $(n_j, j\geq 1)$ and then we conclude the convergence of the whole sequence  $((c^n,h^n), n\geq 1)$ to the unique solution to the TCE \eqref{E:fgammaIdentidad}.  
		\end{proof}

\section{Application to sticky L\'evy processes}

The aim of this section is 
to apply the deterministic analysis of the preceeding section 
to prove Theorems \ref{T:sLpasSolutiontoaTCE} and \ref{T:Euler}. 
The easy part is to obtain existence, uniqueness and approximation, 
while the Markov property 
and the fact that the solution $Z$ to Equation \eqref{E:cdt} is a sticky L\'evy process 
require some extra (probabilistic) work. 
We tackle the existence and uniqueness assertions in Theorem \ref{T:sLpasSolutiontoaTCE} and prove Theorem \ref{T:Euler} in Subsection \ref{S:RandomExistenceUniquenessApproximation}. 
Then, we prove the strong Markov property of solutions to Equation \ref{E:cdt} in Subsection \ref{S:StrongMarkovProperty}. 
This allows us to prove that solutions are sticky L\'evy processes, 
thus finishing the proof of Theorem \ref{T:sLpasSolutiontoaTCE}, 
but leaves open the precise computation of the stickiness parameter 
(or, equivalently, the boundary condition for its infinitesimal generator). 
We finally obtain the boundary condition in Subsection \ref{S:MartingaleProblem}. 
We could use the excursion analysis of \cite{Ramirez2022} 
to obtain the boundary condition 
but decided to also include a different proof via stochastic analysis to make the two works independent. 

\subsection{Existence, Uniqueness and Approximation}
\label{S:RandomExistenceUniquenessApproximation}

	    We now turn to the proof of the existence and uniqueness assertions in Theorem \ref{T:sLpasSolutiontoaTCE}. 
	    \begin{proof}[Proof of Theorem \ref{T:sLpasSolutiontoaTCE}, Existence and Uniqueness]
	    Note that uniqueness of Equation \eqref{E:cdt} is immediate from Proposition \ref{P:monotonia} by replacing the \cadlag function $f$ by the paths of $x+X-\gamma\Id$ and  taking $g=\gamma \Id$.	 
	    
	    To get existence, note that
		applying Lemma \ref{L:existence} to the paths of $X$, we deduce the existence of a unique pair of processes $(R, L)$ satisfying $R_t=z+X_t+L_t$ with $R$ a non-negative process and $L$ a continuous process with non-decreasing paths such that $L_0=0$ and
		$\int_0^t \I(R_s>0)\, dL_s=0.$
		In fact, we have an explicit representation of $L$ as 
		\begin{equation}
        \label{E:expressionforL}
			L_t=\sup_{s\leq t}((-z-X_s)\vee 0)=-\inf_{s\leq t} ((z+X_s)\wedge 0).   
		\end{equation}
		Note that $R$ corresponds to the  process $X$ reflected at its infimum which has been widely studied as a part of the fluctuation theory of L\'evy processes (cf. \cite[Ch. VI, VII]{MR1406564}, \cite{MR0386027} and \cite{MR3155252}).
		
		From the explicit description of the process $L$ given in \eqref{E:expressionforL}, 
		it follows that $\pX(R_t=0)= \pX(X_t=\underline X_t)$, where $\underline{X}_t=\inf_{s\leq t} (X_s\wedge 0)$. 
		Similarly, we denote $\overline{X}_t=\sup_{s\leq t} (X_s\vee 0)$.
		Proposition 3 from \cite[Ch. VI]{MR1406564} ensures that the pairs of variables  $(X_t-\underline{X}_t,-\underline{X}_t)$ and $(\overline{X}_t,\overline{X}_t-X_t)$ have the same distribution under $\pX$.
		Consequently 
		\[\pX (X_t=\underline{X}_t)=\pX ((X_t-\underline{X}_t, -\underline{X}_t)\in \{0\}\times [0,\infty))=\pX ((\overline{X}_t,\overline{X}_t-X_t)\in \{0\}\times [0,\infty))\leq 	\pX (\overline{X}_t=0).\]
		The unbounded variation of $X$ guarantees that $0$ is regular for $(-\infty, 0)$ and for $(0,\infty)$ (as mentioned, this result can be found in \cite{MR0242261} and has been extended in \cite{MR4130409}). 
        Hence, for any $t>0$, $\overline X_t>0$. 
		We decude that $\pX (\overline{X}_t=0)=1-\pX(X_s>0 \text{ for some } s\leq t)=0$.
		Thus,
		\[\espX\left[\int_0^{\infty}  \I(R_t=0 )\, dt \right]=\int_0^{\infty} \pX(X_t=\underline{X}_t)\, dt=0.\]
		Therefore, we can apply Proposition \ref{P:existencia} to deduce the existence of solutions to Equation \eqref{E:cdt}. 
	\end{proof}


		Let us now pass to the proof of \ref{T:Euler}. 
		
		\begin{proof}[Proof of Theorem \ref{T:Euler}]
				As we have stated  in Theorem \ref{T:Euler}, we allow the convergence $X^n\to X$ to be weak or almost surely. 
				Using Skorohod's representation Theorem, we may assume that it is satisfied almost surely in some suitable probability space. 	
				The desired result follows immediately from Proposition \ref{P:aproximacion} by considering the paths of  
				$ f=z+ X-\gamma \Id$  and $  f^n=z_n+ X^n-\gamma \floor{n\cdot}/n.$
		\end{proof}
	  				

		
		
		\subsection{ Measurability details and the strong Markov property}
		\label{S:StrongMarkovProperty}
		In order to complete the proof of Theorem \ref{T:sLpasSolutiontoaTCE}, 
  it remains to verify the adaptability of the unique solution to the TCE \eqref{E:cdt} to the time changed filtration $(\Fil_t,t\geq 0)$ and that such a solution is,  in fact, a sticky L\'evy process based on $X$. This is the objective of the current section, which ends the proof of Theorem \ref{T:sLpasSolutiontoaTCE}.

		By construction the mapping $t\mapsto C_t$ is continuous and strictly increasing. 
		Furthermore, 
        given that $C$ is the inverse of the map  $t\mapsto t+L_t/\gamma$, 
        we can write
		\[\{C_t\leq s\}=\{\gamma(t-s)\leq L_s\}\in\fil_s,\] 
		for every $t\geq 0$. In other words, the random time $C_t$ is a $(\fil_s)$-stopping time, since the filtration is right-continuous. 
		Therefore the process $C$ is a $(\fil_s)$-time change and $Z$ is adapted to the time-changed filtration $(\Fil_t,t\geq 0)$. 
		In this sense we say that $Z$ exhibits no extra randomness to that of the original L\'evy process. 
		This contrasts with the SDE describing sticky Brownian motion (cf. \cite[Theorem 1]{MR1478711}).
		
		Let us verify that the unique solution $Z$ to \eqref{E:cdt} is an extension of the killed process $X^0$. 
        By construction, we see that  if $Z_0=z>0$, then $Z$ equals $X$ until they both reach zero. Hence $Z$ and $X$ have the same law if killed upon reaching zero. 
        Let now $Z$ be the unique solution of \eqref{E:cdt} with $Z_0=z=0$. 
        The concrete construction which proves existence to \eqref{E:cdt} of Section \ref{S:existence} shows that
        \[
            \gamma \int_0^t\I(Z_s=0)\, ds= L\circ C
        \]where $C_t=\int_0^t\I (Z_s>0)\, ds$,  $L_t=-\inf_{s\leq t} X_s$. 
        We have already argued that the unbounded variation hypothesis implies that $L_t>0$ for any $t>0$ and therefore $L_\infty>0$ almost surely. 
        As above, recalling that $C$ is the inverse of $\Id +L/\gamma$, we see that $C_\infty=\infty$. We conclude that $L\circ C_\infty>0$ almost surely, so that $Z$ spends positive time at zero. 
We will now use the  unbounded variation of $X$ to guarantee the regular and instantaneous character of $0$ for $Z$.
		By construction, the unique solution $Z$ to the TCE \eqref{E:cdt} is the process $X$ reflected at its infimum by applying a continuous strictly increasing time change $C$ to it, that is $Z=R\circ C$ where $R=X-\underline{X}$.
		Consequently 
		\[\p{\inf\{s>0:Z_s=0\}=0} =\p{\inf\{s>0: X\circ C_s=\underline{X}\circ C_s \}=0}= \p{\inf\{s>0: X_s=\underline{X}_s \}=0}.\]
		Since $0$ is regular for $(-\infty,0)$  thanks to the unbounded variation hypothesis (meaning that $X$ visits $(-\infty,0)$ immediatly upon reaching $0$), we conclude the regularity of $0$.
		Similarly, given the regularity of $0$ for $(0,\infty)$ for $X$, we have  
		\[\p{\inf\{s>0:Z_s>0\}=0}= \p{\inf\{s>0: X_s>\underline{X}_s \}=0}\geq \p{\inf\{s>0: X_s>0\}=0}=1.\]
		Thus, $0$ is an instantaneous point. 
        
        To conclude the proof of Theorem \ref{T:sLpasSolutiontoaTCE}, 
        it now remains to prove the strong Markov property. 
		From the construction of the unique solution to the TCE \eqref{E:cdt}, we deduce the existence of a measurable mapping $F_s$ that maps the paths of the L\'evy process $X$ and the initial condition $z$ to the unique solution to the TCE \eqref{E:cdt} evaluated at time $s$, that is, $Z_s=F_s(X,z)$ for $s\geq  0$. 
		Let $T$ be a $(\Fil_t)$-stopping time.
		Approximating $T$ by a decreasing sequence of $(\Fil_t)$-stopping times $(T^n, n\geq 1)$ taking only finitely many values, 
		we see that $C_T$ is an $(\fil_t)$-stopping time. 
		From the TCE \eqref{E:cdt}, we deduce that 
\[			Z_{T+s}=Z_{T}+(X_{C(T+s)}-X_{C(T)})+\gamma  \int_{0}^{s} \I(Z_{T+r} =0)\, dr.\]
		Consider the processes  $\tilde{C}$, $\tilde{X}$ and $ \tilde{Z}$  given by   $\tilde{C}_s=C(T+s)-C(T)$,  $\tilde{X}_s=X_{C(T)+s}-X_{C(T)}$ and $\tilde{Z}_s=Z_{T+s}$ respectively. 
		We can write the last equation as   
		\begin{equation}\label{E:cdt_trasladada}
			\tilde{Z}_{s}=Z_T+\tilde{X}_{\tilde{C}(s)}+\gamma  \int_{0}^{s} \I(\tilde{Z}_r =0)\, dr,
		\end{equation} 
		and  $\tilde{C}$ satisfies  $\tilde{C}_s=\int_{0}^{s} \I(\tilde{Z}_r >0)\, dr$ for $s\geq 0$.
		In other words, $\tilde{Z}$ is solution to the TCE \eqref{E:cdt} driven by $\tilde{X}$ with initial condition $Z_T$.
		Consequently  $\tilde{Z}_s=F_s(\tilde{X},Z_{T})$.
		Note that $\tilde{X}$ has the same distribution as $X$  and it is independent of $\Fil_T$. 
		Hence, the conditional law of $\tilde Z$ given $\Fil_T$ is that of $F(\cdot, Z_T)$. 
		(One could make appeal to Lemma 8.7 in \cite[p. 169]{MR4226142} if needed.)
		This allows us to conclude that $Z$ is a strong Markov process and concludes the proof of Theorem \ref{T:sLpasSolutiontoaTCE}. 

	\subsection{Stickiness and martingales}\label{S:martingale_problem}
	\label{S:MartingaleProblem}
		In this section we aim at describing the boundary condition of the infinitesimal generator of the sticky L\'evy process $Z$ of Theorem \ref{T:sLpasSolutiontoaTCE} by proving the following result. 
		\begin{proposition}
		\label{P:generatorOfSLP}
		Let $X$ be a L\'evy process of unbounded variation and no negative jumps and let $\mathcal{L}$ be its infinitesimal generator.
		For a given $z\geq 0$, 
		let $Z$ be the unique (strong Markov) process satisfying the time-change equation \eqref{E:cdt}: 
		\[
			Z_t=z+X_{\int_0^t \I(Z_s>0)\, ds}+\gamma\int_0^t \I(Z_s=0)\, ds. 
		\]Then, for every $f:[0,\infty)\to \mathbb{R}$ which is of class $\mathcal{C}_{2,b}$ and which satisfies the boundary condition $\gamma f'(0+)=\mathcal{L}f(0+)$, the process $M$ defined by
		\[
			M_t=f(Z_t)-\int_0^t \mathcal{L}f(Z_s)\, ds
		\]is a  martingale and 
		\[
		\left.\frac{\partial }{\partial t}\right|_{t=0}\mathbb{E}(f(Z_t))=\mathcal{L}f(z). 
		\]
		\end{proposition}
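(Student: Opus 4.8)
The plan is to reduce everything to an application of Itô's formula to the reflected process $R$ followed by the time change $C$. Recall from the construction behind Proposition \ref{P:existencia} (and used in Section \ref{S:StrongMarkovProperty}) that the solution is built as $Z=R\circ C$, where $R=z+X+L$ is the SPLP reflected at its infimum (so that $R\ge 0$, $L$ is continuous, non-decreasing, $L_0=0$, and $L$ increases only on $\{R=0\}$) and $C$ is the continuous, strictly increasing, $1$-Lipschitz time change inverting $A=\Id+L/\gamma$. I would first obtain a martingale decomposition of $f(R)$, then transport it through $C$, and finally read the boundary condition as the exact algebraic identity that turns the local-time contribution into the part of $\int_0^\cdot\mathcal Lf(Z)$ supported on $\{Z=0\}$.

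First I would extend $f$ to a $\mathcal C^2_b$ function on $\R$; this is harmless, since $R\ge0$ and $X$ has only positive jumps, so $f,f',f''$ are only ever evaluated at points of $[0,\infty)$ and the jump integrals only see arguments $R_{s-}+y$ with $y>0$ (this is exactly why $\mathcal Lf$ makes sense on functions defined only on $[0,\infty)$). Applying Itô's formula to the semimartingale $R=z+X+L$, whose continuous martingale part is the Gaussian part of $X$ and whose jumps are those of $X$, and compensating the jump sum, the drift, Gaussian and compensated-jump terms assemble into the generator $\mathcal L$ of $X$, while the continuous finite-variation process $L$ contributes one extra term; since $L$ increases only where $R=0$, that term is $f'(0+)L_t$. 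Thus
\[
    f(R_t)=f(z)+\int_0^t \mathcal Lf(R_s)\,ds+f'(0+)\,L_t+N_t,
\]
where $N$ is a martingale. Because $f,f',f''$ are bounded, $\mathcal Lf$ is bounded and $N$ is in fact a genuine square-integrable martingale, using $\int_{(0,\infty)}(1\wedge y^2)\,\nu(dy)<\infty$ to control the compensated jump integral.

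Next I would replace $t$ by $C_t$ and use $Z=R\circ C$. The change of variable $s=C_r$ together with $dC_r=\I(Z_r>0)\,dr$ turns $\int_0^{C_t}\mathcal Lf(R_s)\,ds$ into $\int_0^t\mathcal Lf(Z_r)\,\I(Z_r>0)\,dr$, and evaluating $A=\Id+L/\gamma$ at $C_t$ gives $L_{C_t}=\gamma(t-C_t)=\gamma\int_0^t\I(Z_r=0)\,dr$. Hence
\[
    f(Z_t)=f(z)+\int_0^t \mathcal Lf(Z_r)\,\I(Z_r>0)\,dr+\gamma f'(0+)\int_0^t\I(Z_r=0)\,dr+N_{C_t}.
\]
Here the boundary condition $\gamma f'(0+)=\mathcal Lf(0+)$ is precisely what is needed: on $\{Z_r=0\}$ one has $\mathcal Lf(Z_r)=\mathcal Lf(0+)=\gamma f'(0+)$, so the third term equals $\int_0^t\mathcal Lf(Z_r)\,\I(Z_r=0)\,dr$ and merges with the second into $\int_0^t\mathcal Lf(Z_r)\,dr$. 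This yields $M_t=f(Z_t)-\int_0^t\mathcal Lf(Z_r)\,dr=f(z)+N_{C_t}$. Since the $C_t$ are bounded $(\fil_s)$-stopping times (indeed $C_t\le t$) and $N$ is a true $(\fil_s)$-martingale, optional sampling shows that $N_{C_t}$, and hence $M$, is a martingale for $(\Fil_t)=(\fil_{C_t})$.

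Finally, taking expectations kills $N_{C_t}$ and gives $\E{f(Z_t)}=f(z)+\E{\int_0^t\mathcal Lf(Z_r)\,dr}$; dividing by $t$ and letting $t\to0$, bounded convergence together with the right-continuity of $r\mapsto\mathcal Lf(Z_r)$ at $0$ (using $Z_0=z$, continuity of $\mathcal Lf$, and right-continuity of $\mathcal Lf$ at the boundary) yields $\left.\partial_t\right|_{t=0}\E{f(Z_t)}=\mathcal Lf(z)$. I expect the main obstacle to be the rigorous Itô step: verifying that $N$ is a genuine martingale (rather than merely local) and that the finite-variation contribution is exactly $f'(0+)L_t$, so that the boundary condition produces the clean cancellation. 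Once that is in place, the transport through the time change via optional sampling is routine thanks to the bound $C_t\le t$.
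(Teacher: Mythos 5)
Your proof is correct, but it follows a genuinely different route from the paper's. The paper applies It\^o's formula directly to $f(Z_t)$, i.e.\ \emph{after} the time change, and must therefore manipulate time-changed stochastic integrals; this is what forces it to invoke a lemma on preservation of local martingales under continuous time changes (Lemma \ref{L:localMartingaleUnderTimeChange}), Jacod's theorems on quadratic variation and stochastic integration under time change, and the fact that the inverse $A$ of $C$ is again a time change (to handle the jump sums). The boundary condition then appears as the vanishing of the coefficient $\tilde{\mathcal{L}}f(0)=\mathcal{L}f(0+)-\gamma f'(0+)$ multiplying $\int_0^t \I(Z_s=0)\,ds$. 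You instead do It\^o \emph{before} the time change, on $f(R)$ with $R=z+X+L$ the Skorohod reflection, so that the stickiness enters as the local-time term $f'(0+)L_t$; the passage to $Z=R\circ C$ is then a purely deterministic change of variables (using $dC_r=\I(Z_r>0)\,dr$ and $L_{C_t}=\gamma(t-C_t)$), and the boundary condition performs the same cancellation there. The only remaining stochastic step is optional sampling of the true martingale $N$ at the bounded stopping times $C_s\le C_t\le t$, which replaces the paper's time-change lemma and the Jacod machinery altogether. What each approach buys: yours is shorter and more elementary, since It\^o is applied to an honest $(\fil_t)$-semimartingale and the time change only ever acts on Lebesgue--Stieltjes integrals and on a genuine martingale; the price is that it leans on the representation $Z=R\circ C$ from the existence proof, whereas the paper's computation works intrinsically with the TCE and deliberately showcases stochastic calculus under time changes. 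Two small points you should make explicit in a final write-up: on the support of $dL$ one has $R_{s-}=0$ as well as $R_s=0$ (because $R_{s-}\ge 0$ and $\Delta R_s=\Delta X_s\ge 0$), which is what justifies $\int_0^t f'(R_{s-})\,dL_s=f'(0+)L_t$; and the merging of $\int_0^t \mathcal{L}f(Z_r)\I(Z_r>0)\,dr$ with $\gamma f'(0+)\int_0^t\I(Z_r=0)\,dr$ uses the non-negativity of $Z$ (Proposition \ref{P:nonNeg}) so that $\I(Z_r>0)+\I(Z_r=0)=1$ for Lebesgue-a.e.\ $r$.
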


		Theorem 2 from \cite{Ramirez2022} describes the domain of the infinitesimal generator of any recurrent extension of $X^0$ (which is proved to be a Feller process)  by means of three non-negative constants $ p_c, p_d, p_\kappa $ and a measure $ \mu $ on $ (0, \infty) $. 
		To describe such parameters we note a couple of important facts about the unique solution to \eqref{E:cdt}. 
		By construction we can see that it leaves $0$ continuously. 
		Indeed, if we consider the left endpoint $g$  of some excursion interval of $Z$, then $C_g$ is the left endpoint of some  excursion interval of the process reflected at its infimum $R$. Thanks to Proposition 2 from \cite{Ramirez2022}, such excursions start at $0$, so $Z$ leaves $0$ continuously. Thus, from \cite{Ramirez2022},  $p_c>0$ and $\mu=0$.
		Note also that $Z$ has infinite lifetime because $R$ has it and $C$ is bounded by the identity function, so $p_\kappa=0$.
		Finally, since 	$Z$  spends positive time at $0$, then $p_d>0$.	
		Theorem 2 from \cite{Ramirez2022} ensures that every 
        function $f$ in the domain of the infinitesimal 
        generator of $Z$ satisfies \[f'(0+)=\frac{p_d}{p_c} \mathcal{L}f(0+) .\]
		Our proof of Proposition \ref{P:generatorOfSLP} does not require the results from \cite{Ramirez2022}.
		The main intention is to give an application of stochastic calculus, 
		since we recall that a classical computation of the infinitesimal generator for L\'evy processes is based on Fourier analysis (cf. \cite{MR1406564}). 
        Regarding the generator $\mathcal{L}$, recall that it can be applied to $C_{2,b}$ functions such as $f$ and that $\mathcal{L}f$ is continuous (an explicit expression is forthcoming). 
        The lack of negative jumps implies that $\mathcal{L}f$ is defined even if $f$ is only defined and $C_{2,b}$ on an open set containing $[0,\infty)$. 

        \begin{proof}[Proof of Proposition \ref{P:generatorOfSLP}]
		Let $Z$ be the unique solution to the TCE \eqref{E:cdt} driven by the SPLP  $X$. 
		It\^{o}'s formula for semimartingales
		\cite[Chapter II, Theorem 32]{MR2020294} guarantees that for every function  $f\in C^2_0[0,\infty)$: 
		\begin{linenomath}
		\begin{align}
			\nonumber f(Z_t) =&f(z)+\int_0^t f'(Z^-_{s})\, dX_{C_s}+\int_0^t \gamma f'(Z^-_{s})\I(Z^-_{s}=0)\, ds+\frac{1}{2}\int_0^t f''(Z^-_{s})\, d[Z,Z]_s^c\\
			\label{E:ItoFormula}
			&+\sum_{s\leq t} ( \Delta f(Z_s)- f'(Z^-_{s}) \Delta Z_{s}). 
		\end{align} 
		\end{linenomath}
		
		In order to analyze this expression, we recall the so-called L\'evy-It\^{o} decomposition, which describes the structure of any L\'evy process in terms of three independent auxiliary L\'evy processes, each with a different type of path behaviour.
		Consider the Poisson point process $N$ of the jumps of $X$ given by
		\[N_t=\sum_{s\leq t} \delta_{(s,\Delta X_s)}.  \]
		Denote by  $\nu$ the characteristic measure of $N$, 
		which is called the L\'evy measure of $X$ and fulfills the integrability condition
		$\int_{(0,\infty)} (1\wedge x^2)\, \nu(dx)<\infty$. 
		Then, we write the L\'evy-It\^o decomposition as
		$X=X^{(1)}+X^{(2)}+X^{(3)}$,
		where $X^{(1)}=bt+\sigma B_t$ is a Brownian motion independent of $N$, 
		with diffusion coefficient $\sigma^2\geq 0$ and drift  
		$b=\espX[X_1-\int_{(0,1]} \int_{[1,\infty)}  x\, N(ds,dx)]  $,
		\[X^{(2)}=\int_ {(0,t]} \int_{[1, \infty)} x\, N(ds,dx)\]
		is a compound Poisson  process consisting of the sum of the large jumps of $X$ and finally  
		\[X^{(3)}=\int_ {(0,t]} \int_{(0,1)} x \, (N(ds,dx)-\nu(dx)ds) \]
		is a square-integrable martingale. 
		
		Assuming the L\'evy-It\^{o} decomposition of $X$ and using the next result, whose proof is postponed, we will see that  $\int_0^t	f'(Z^-_{s})\, dX_{C_s}$ is a semimartingale of the form
		\begin{equation}\label{E:integral_primera_derivada}
			M_t +\int_0^t b f'(Z^-_{s}) (1-\I(Z_s=0))\, ds+ \int_0^t f'(Z^-_{s})\, dX^{(2)}_{C_s}, 
		\end{equation}
		for some square-integrable martingale $M$. 
		
		\begin{lemma}\label{L:localMartingaleUnderTimeChange}
			Let $C$ be a $(\fil_t)$-time change whose paths are continuous and locally bounded. 
			Let $X$ be a right-continuous local martingale with respect to $(\fil_t,t\geq 0)$. 
			Then the time-changed process $X_C$ is a right-continuous local martingale with respect to the time-changed filtration $(\Fil_t,t\geq 0)$.
		\end{lemma}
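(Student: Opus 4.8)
The plan is to prove the lemma by the classical two-step route: first reduce the local-martingale assertion to a statement about a genuine (indeed uniformly integrable) martingale by exhibiting a localizing sequence adapted to the \emph{time-changed} filtration, and then settle the uniformly integrable case with the optional sampling theorem. The two soft properties I would dispatch immediately. The path $t\mapsto X_{C_t}$ is right-continuous because $X$ is right-continuous and $C$ is continuous. And $X_{C_t}$ is $\Fil_t=\fil_{C_t}$-measurable because $X$, being right-continuous and $(\fil_t)$-adapted, is progressively measurable, so its evaluation at the $(\fil_t)$-stopping time $C_t$ is $\fil_{C_t}$-measurable.

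For the localization, let $(\sigma_n)$ be a localizing sequence of $(\fil_t)$-stopping times with $\sigma_n\uparrow\infty$ for which each stopped process $X^{\sigma_n}$ is a uniformly integrable $(\fil_t)$-martingale, and set $\rho_n=\inf\{t\ge 0:C_t\ge\sigma_n\}$. The key measurability point is that $\rho_n$ is an $(\Fil_t)$-stopping time: since $C$ is increasing and continuous, $\{\rho_n\le t\}=\{\sigma_n\le C_t\}$, and for the two $(\fil_s)$-stopping times $\sigma_n$ and $C_t$ the comparison event $\{\sigma_n\le C_t\}$ lies in $\fil_{C_t}=\Fil_t$ by the standard fact that, for stopping times $S$ and $T$, the sets $\{S\le T\}$, $\{S<T\}$ and $\{S=T\}$ belong to $\fil_{S\wedge T}\subseteq\fil_T$. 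Because $\sigma_n\uparrow\infty$ and $C_t<\infty$ for each $t$, monotonicity of $C$ gives $\rho_n\uparrow\infty$. The decisive identity is $C_{t\wedge\rho_n}=C_t\wedge\sigma_n$, which follows from the continuity and monotonicity of $C$ (at a finite $\rho_n$, continuity yields $C_{\rho_n}=\sigma_n$). Consequently the stopped time-changed process satisfies $(X_C)^{\rho_n}_t=X_{C_t\wedge\sigma_n}=(X^{\sigma_n})_{C_t}$, i.e.\ it is the time change of the uniformly integrable martingale $X^{\sigma_n}$.

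It therefore remains to show that if $Y$ is a uniformly integrable $(\fil_t)$-martingale then $Y_{C_t}$ is an $(\Fil_t)$-martingale; applying this to $Y=X^{\sigma_n}$ and letting $n\to\infty$ then exhibits $X_C$ as an $(\Fil_t)$-local martingale. For the uniformly integrable case I would invoke optional sampling in its strongest form: $Y_\infty=\lim_t Y_t$ exists in $L^1$ and $Y_T=\espX[Y_\infty\mid\fil_T]$ for every (possibly infinite) stopping time $T$. Taking $T=C_t$ gives $Y_{C_t}=\espX[Y_\infty\mid\Fil_t]$, so $Y_C$ is a closed martingale for $(\Fil_t)$; integrability is automatic, and the martingale identity $\espX[Y_{C_t}\mid\Fil_s]=Y_{C_s}$ for $s\le t$ follows from the tower property together with $\Fil_s\subseteq\Fil_t$, itself a consequence of $C_s\le C_t$.

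I expect the main obstacle to be the localization step rather than the uniformly integrable case: one must choose the stopping times $\rho_n$ so that they are adapted to the coarser, time-changed filtration $(\Fil_t)$ while \emph{simultaneously} linearizing the composition through the identity $C_{t\wedge\rho_n}=C_t\wedge\sigma_n$. The hypotheses that $C$ has continuous, locally bounded paths are exactly what make both of these work — continuity yields $C_{\rho_n}=\sigma_n$ and finiteness of $C_t$ yields $\rho_n\uparrow\infty$ — so the proof is essentially an exercise in matching the localization to the time change.
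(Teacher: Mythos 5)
Your proof is correct and follows essentially the same route as the paper's: your $\rho_n=\inf\{t\ge 0:C_t\ge\sigma_n\}$ is exactly the paper's $A(\beta_n)$ (the inverse time change evaluated at the localizing sequence), the stopping-time verification via $\{\sigma_n\le C_t\}\in\fil_{C_t}=\Fil_t$ matches theirs, and both arguments conclude by the identity $C_{t\wedge\rho_n}=C_t\wedge\sigma_n$ together with optional sampling for the stopped uniformly integrable martingale. The only (cosmetic) difference is that you deduce $\rho_n\uparrow\infty$ from the local boundedness of $C$, which is truer to the lemma's stated hypotheses, whereas the paper invokes $C\le\Id$, a feature of its particular application.
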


		Lemma \ref{L:localMartingaleUnderTimeChange} ensures that the time-changed process $(\sigma B+X^{(3)})\circ C$ remains a local martingale.
		According to Theorem 20 from \cite[Chapter II]{MR2020294}, square-integrable local martingales are preserved under stochastic integration provided that the integrand process is adapted and has \cadlag  paths.
		Consequently the stochastic integral\footnote{We use both notations  $\int H_s \, dX_s$ and $H\cdot X$ to refer to the stochastic integral.} 
		$M=f'(Z^{-})\cdot (\sigma B_{C}+X^{(3)}_{C})$ is a $(\Fil_t)$-local martingale.
		Thanks to Corollary 27.3 from \cite[Chapter II]{MR2020294}, we know that a necessary and sufficient condition for a local martingale to be a   square-integrable martingale is that its quadratic variation is integrable. 
		Let us verify that $\E{[M,M]_t}<\infty$ for every $t\geq 0$.
		Theorem 10.17 from \cite{MR542115} implies  the quadratic variation of the time-changed process coincides with the time change of the quadratic variation
		\[\left[\sigma B_{C}+X^{(3)}_{C}, \sigma B_{C}+X^{(3)}_{C}\right]_t=\left[\sigma B+X^{(3)}, \sigma B+X^{(3)}\right]_{C_t}, \quad t\geq0.\]
		Given that the Brownian motion $B$ is independent of $X^{(3)}$, the quadratic variation is 
		$\sigma^2 C_t+\left[X^{(3)}, X^{(3)}\right]_{C_t}$, which is bounded by $\sigma^2 t+\left[X^{(3)}, X^{(3)}\right]_{t}$.
		Thus 
		\[\E{[M,M]_t} \leq \|f'\|^2_\infty \espX\left[\left[\sigma B+X^{(3)}, \sigma B+X^{(3)}\right]_{C_t}\right]\leq  \|f'\|^2_\infty\left(\sigma^2 t+t\int_{(-1,1)} x^2\, \nu(dx)\right)<\infty. \]
		This verifies the decomposition  \eqref{E:integral_primera_derivada}. 
		Later we will deal with the last term of this decomposition.
		
		Coming back to It\^{o}'s formula \eqref{E:ItoFormula}, 
		we need to calculate the term corresponding to the integral with respect to the continuous part of the quadratic variation of $Z$.
		First, we decompose the  variation as
		\[[Z,Z]_s=[X_{C},X_C]_s+2[X_{C},\gamma (\Id-C)]_s+\gamma^2 [\Id-C,\Id-C]_s,\]
		for every $s\geq 0$. 
		The first term is  $[X,X]_{C_s}$.
		Given the finite variation of $\gamma (\Id-C)$ and the continuity of $C$,  Theorem 26.6 from \cite{MR1876169} implies that almost surely the other two terms are zero. 
		Thereby $[Z,Z]_s=[X, X]_{C_s}$ for every $s\geq 0$ and  
		\[\frac{1}{2}\int_0^t f''(Z^-_{s})\, d[Z,Z]_s^c= \frac{1}{2}\int_0^t \sigma^2 f''(Z^-_{s})(1-\I(Z_s=0) )\, ds.\]
		Now we analyze the last term on the right-hand side from   \eqref{E:ItoFormula}, 
		which corresponds to the jump part. 
		Let us note that the discontinuities of $f\circ Z$ derive from the discontinuities of $Z$, which are caused by the jumps  of $X\circ C$, in other words
		\[\{s\leq t: |\Delta f(Z_s)|>0\}\!\subseteq\! \{s\leq t: \Delta Z_s>0\}=\{s\leq t: \Delta (X\circ C)_s>0\}.\]
		Making the change of variable $r=C_s$, the sum of the jumps in \eqref{E:ItoFormula} can be written as 
		\begin{equation}\label{E:suma_saltos_formula_ito}
			\sum_{r\leq C_t}(\Delta f(Z\circ A_r)-f'(Z^-\circ A_{r})\Delta (Z\circ A_r)), 
		\end{equation}
		where 	$A$ denotes the inverse of $C$.	
		We claim that $A$ is a $(\Fil_t)$-time change. 
		Indeed, splitting in the cases $r<t$ and $r\geq t$, we see that 
		$\{A_t\leq s\}\cap \{C_s\leq r\}=\{t\leq C_s\leq r\}\in\fil_r$ for any $r\geq 0$.
		Exercise 1.12 from \cite[Chapter V]{MR1725357} ensures that the time-changed filtration $(\Fil_{A_t},t\geq 0) $ is in fact  $(\fil_t,t\geq 0)$.
		Thus, for any continuous function  $g$, the process   $(g(Z^-_{ A_{t}}), t\geq 0)$ is $(\fil_t)$-predictable.	
		
		We return to \eqref{E:integral_primera_derivada} to put together the sum of the jumps in \eqref{E:suma_saltos_formula_ito} and the stochastic integral  $(f'\circ Z^-) \cdot  (X^{(2)}\circ{C})$.
		For this purpose, it is convenient to rewrite the last integral as   
		$(f'\circ Z^- \circ A\circ C)\cdot  (X^{(2)}\circ{C})$  and apply Lemma 10.18 from \cite{MR542115} to deduce that 
		$(f'\circ Z^-) \cdot  (X^{(2)}\circ{C})=((f'\circ Z^- \circ A)\cdot  X^{(2)})\circ{C}$.
		Consequently
		\begin{linenomath}
		\begin{align}\nonumber
			\int_0^{t} f'(Z^-_{s})\, dX^{(2)}_{C_s}+ &\sum_{s\leq C_t}(\Delta f(Z\circ A_s)-f'(Z^-\circ A_{s})\Delta (Z\circ A_s))\\ \nonumber
			=\int_0^{C_t} \int_{(0,\infty)}  &\left(f(Z^{-}_{A_{s}}+x)-f(Z^{-}_{A_{s}})-f'(Z^{-}_{A_{s}})x \I(x\in(0,1))\right)\, (N(ds,dx)-\nu(dx)\, ds)\\ 	\label{E:integral_saltos_primera_derivada}
			+& \int_0^{C_t} \int_{(0,\infty)} \left(f(Z^{-}_{A_{s}}+x) -f(Z^{-}_{A_{s}}) -f'(Z^{-}_{A_{s}})x\I(x\in(0,1))\right) \, \nu(dx)\, ds.
		\end{align}		
		\end{linenomath}
		Define the process $\overline{M}$ by
		\begin{linenomath}
		\begin{align*}
			\overline{M}_t =	&-\int_0^{t}  \int_{[1, \infty)}  \left(f(Z^{-}_{A_{s}}+x)-f(Z^{-}_{A_{s}})\right)\, \nu(dx)\, ds\\ 
			&+\int_0^{t}  \int_{[1, \infty)}  \left(f(Z^{-}_{A_{s}}+x)-f(Z^{-}_{A_{s}})\right)\, N(ds,dx)\\ 
			&+\int_0^{t} \int_{(0,1)}  \left(f(Z^{-}_{A_{s}}+x)-f(Z^{-}_{A_{s}})-f'(Z^{-}_{A_{s}})x\right)\, (N(ds,dx)-\, ds).
		\end{align*}
		\end{linenomath}
		Since $\nu$ is a L\'evy measure, then  
		\[\E{\int_0^{t}  \int_{[1, \infty)}  \left|f(Z^{-}_{A_{s}}+x)-f(Z^{-}_{A_{s}})\right|\, \, ds}\leq \|f\|^2_\infty t\nu([1,\infty)) <\infty.\]
		We develop the first degree Taylor polynomial of $f(Z^{-}_{A_{s}}+x)$ to obtain  
		\[f'(Z^{-}_{A_{s}})x=f(Z^{-}_{A_{s}}+x)-f(Z^{-}_{A_{s}})-R(x), \quad x\in(0,1), \]
		where the remainder $R$ satisfies $|R(x)|\leq \frac{1}{2} \|f''\|_{\infty} x^2$.
		Therefore 
		\[ \E{\int_0^{t} \int_{(0,1)}  \left(f(Z^{-}_{A_{s}}+x)-f(Z^{-}_{A_{s}})-f'(Z^{-}_{A_{s}})x\right)\, \nu(dx)\, ds} \leq \frac{1}{2}\|f''\|_{\infty} t \E{\int_{(0,1)}  x^2 \, \nu(dx)}<\infty.\]	
		Theorem 5.2.1 from  \cite{MR2512800} ensures that  $\overline{M}$ is a $(\fil_t)$-local martingale
		and Lemma \ref{L:localMartingaleUnderTimeChange} implies that $\overline{M}_C$ is a $(\Fil_t)$-local martingale. 
		Furthermore, for  $t\geq 0$ it holds that
		\[\E{\sup_{s\leq t} |\overline{M}_{C_s}|}\leq \E{\sup_{s\leq t} |\overline{M}_{s}|}\leq \left(2\|f\|_\infty+\frac{1}{2} \|f''\|^2_\infty \right)t\int_{(0,\infty)} (1\wedge x^2)\, \nu(dx)<\infty.\]
		It follows from Theorem 51 from \cite[Chapter I]{MR2020294} that   $\overline{M}_C$ is a true martingale. 
		
		Gathering all the expressions involved in It\^{o}'s formula \eqref{E:ItoFormula}, we get the semimartingale decomposition
		\begin{linenomath}
		\begin{align*}
			f(Z_t) -f(z)=&M_t+\int_0^t b f'(Z^-_{s}) (1-\I(Z_s=0))ds+\int_0^t \gamma 	f'(0+)\I(Z_{s}=0)\, ds\\
			&+\frac{1}{2}\int_0^t \sigma^2 f''(Z^-_{s})(1-	\I(Z_s=0)) \, ds+\overline{M}_{C_t}\\
			&+ \int_0^{C_t} \int_{(0,\infty)} \left(f(Z^{-}_{A_{s}}+x) -f(Z^{-}_{A_{s}}) -f'(Z^{-}_{A_{s}})x \I(x\in(0,1))\right)\,  \nu(dx) \, ds.
		\end{align*} 
		\end{linenomath}
		Recall that the extended generator of $X$ (as in \cite[Ch. VII]{MR1725357}) is given by
        \[
            \mathcal{L}f(z)=b f'(z)+ \frac{\sigma^2}{2} f''(z)
            + \int_{\R_+} \left(f(z+x) -f(z) -f'(z)x\I(x\in(0,1))\right) \, \nu(dx)
        \]on $C_{2,b}$ functions and that the extended generator of $X^0$ is given by $\mathcal{L}f$ on $C_{2,b}$ functions $f$ on $[0,\infty)$ which vanish (together with its derivatives) at $0$ and $\infty$. Note that $\mathcal{L}f(z)$ is bounded. 
        Define $\tilde{\mathcal{L}}f(0)$ by  
		\[\tilde{\mathcal{L}}f(0)= 	(b-\gamma)f'(0+)+\frac{\sigma^2}{2} f''(0+)+ \int_{\R_+} \left(f(x) -f(0+) -f'(0+)x\I(x\in(0,1))\right)\,  \nu(dx). \]
		Given that $\tilde{\mathcal{L}}f(0)=\mathcal{L}f(0+)-\gamma f'(0+)$,
		we can write the martingale $M+\overline{M}_{C}$ as
		\[M+\overline{M}_{C_t}= f(Z_t) -f(z)
        -\int_0^t \mathcal{L}f(Z^-_{s})\, ds
        +\int_0^t \tilde{\mathcal{L}}f(0)\I(Z_{s}=0)\, ds.\]
        We deduce that if a function $f\in C^2[0,\infty)$ satisfies the boundary condition $\tilde{\mathcal{L}}f(0)=0$	or equivalently	 $ \gamma f'(0+)= \mathcal{L}f(0+)$, then 
        $f(Z_t)-f(z)-\int_0^t \mathcal{L}f(Z_s)\, ds$ is a martingale. By hypothesis, the last term is bounded by a linear function of $t$, so that $\E{f(Z_t)}$ is differentiable at zero and the derivative equals $\mathcal{L}f(z)$. 
        \end{proof}
        
		We conclude this section with the proof of Lemma \ref{L:localMartingaleUnderTimeChange}.
		
		\begin{proof} \textit{(Lemma \ref{L:localMartingaleUnderTimeChange}) }
			Let $(\beta_n,n\geq 1)$ be localizing sequence for $X$, then $\beta_n \to \infty$ as $n \to \infty$ and for each  $n\geq 1$, the process $X^{\beta_n}I(\beta_n>0) $ is a uniformly integrable martingale.
			Keeping the notation $A$ for the inverse of $C$, we will prove that $(A(\beta_n),n\geq 1)$ is a sequence of $(\Fil_t)$-stopping times that localizes to  $X_C$. 
			The property of being $(\fil_t)$-stopping time is deduced by observing that 
			$\{\beta_n\leq C_t\}\in\fil_{\beta_n}\cap \fil_{C_t}\subset \Fil_t$, which implies that 
			\[\{A(\beta_n)\leq t\}\cap\{C_t\leq s\}=\{\beta_n\leq C_t\}\cap \{C_t\leq s\}\in  \fil_s. \]
			Since  $C\circ A=\Id$, then  
			\[(Z\circ C)^{A(\beta_n)}_t=Z_{C_t\wedge \beta_n}=Z^{\beta_n}_{C_t}.\] 
			Given that $Z^{\beta_n}$  is a $(\fil_t)$-martingale, Optional Stopping Theorem  guarantees that  
			\[\E{\left.Z^{\beta_n}_{C_t}\right|\fil_{C_s}}=Z^{\beta_n}_{C_s},\quad 0\leq s\leq t.\]
			Hence  $(Z\circ C)^{A(\beta_n)}$ is a $(\Fil_t)$-martingale.
			Moreover $A(\beta_n)\to \infty$ as $n\to \infty$ since $C\leq\Id$.
		\end{proof}


\begin{thebibliography}{CPGUB17}

\bibitem[AHUB20]{MR4130409}
Osvaldo Angtuncio~Hern\'{a}ndez and Ger\'{o}nimo Uribe~Bravo, \emph{Dini
  derivatives and regularity for exchangeable increment processes}, Trans.
  Amer. Math. Soc. Ser. B \textbf{7} (2020), 24--45.

\bibitem[Ami91]{MR1136247}
Madjid Amir, \emph{Sticky {B}rownian motion as the strong limit of a sequence
  of random walks}, Stochastic Process. Appl. \textbf{39} (1991), no.~2,
  221--237.

\bibitem[App09]{MR2512800}
David Applebaum, \emph{L{\'e}vy processes and stochastic calculus}, second ed.,
  Cambridge Studies in Advanced Mathematics, vol. 116, Cambridge University
  Press, Cambridge, 2009. \MR{2512800}

\bibitem[Bas14]{MR3183576}
Richard~F. Bass, \emph{A stochastic differential equation with a sticky point},
  Electron. J. Probab. \textbf{19} (2014), no. 32, 22. \MR{3183576}

\bibitem[Ber96]{MR1406564}
Jean Bertoin, \emph{L{\'e}vy processes}, Cambridge Tracts in Mathematics, vol.
  121, Cambridge University Press, Cambridge, 1996. \MR{1406564}

\bibitem[Bil99]{MR1700749}
Patrick Billingsley, \emph{Convergence of probability measures}, second ed.,
  Wiley Series in Probability and Statistics: Probability and Statistics, John
  Wiley \& Sons Inc., New York, 1999.

\bibitem[Bin75]{MR0386027}
N.~H. Bingham, \emph{Fluctuation theory in continuous time}, Advances in Appl.
  Probability \textbf{7} (1975), no.~4, 705--766. \MR{MR0386027}

\bibitem[BR20]{MR4161129}
Guillaume Barraquand and Mark Rychnovsky, \emph{Large deviations for sticky
  {B}rownian motions}, Electron. J. Probab. \textbf{25} (2020), Paper No. 119,
  52.

\bibitem[BRHC20]{MR4064533}
Nawaf Bou-Rabee and Miranda~C. Holmes-Cerfon, \emph{Sticky {B}rownian motion
  and its numerical solution}, SIAM Rev. \textbf{62} (2020), no.~1, 164--195.

\bibitem[BY02]{MR1885582}
Bernard Bru and Marc Yor, \emph{Comments on the life and mathematical legacy of
  {W}olfgang {D}oeblin}, Finance Stoch. \textbf{6} (2002), no.~1, 3--47.
  \MR{1885582}

\bibitem[Chi89]{chitashviliTR}
R.~Chitashvili, \emph{On the nonexistence of a strong solution in the boundary
  problem for a sticky {B}rownian motion}, Tech. report, Department of
  Operations Research and System Theory, Centrum Wiskunde {\&} Informatica,
  1989.

\bibitem[Chi97]{MR1639096}
\bysame, \emph{On the nonexistence of a strong solution in the boundary problem
  for a sticky {B}rownian motion}, Proc. A. Razmadze Math. Inst. \textbf{115}
  (1997), 17--31. \MR{1639096}

\bibitem[CPGUB13]{MR3098685}
Ma.~Emilia Caballero, Jos{\'e}~Luis P{\'e}rez~Garmendia, and Ger{\'o}nimo
  Uribe~Bravo, \emph{A {L}amperti-type representation of continuous-state
  branching processes with immigration}, Ann. Probab. \textbf{41} (2013),
  no.~3A, 1585--1627. \MR{3098685}

\bibitem[CPGUB17]{MR3689968}
M.~Emilia Caballero, Jos{\'e}~Luis P{\'e}rez~Garmendia, and Ger{\'o}nimo
  Uribe~Bravo, \emph{Affine processes on {$\Bbb R_+^m\times\Bbb R^n$} and
  multiparameter time changes}, Ann. Inst. Henri Poincar{\'e} Probab. Stat.
  \textbf{53} (2017), no.~3, 1280--1304. \MR{3689968}

\bibitem[EK86]{MR838085}
Stewart~N. Ethier and Thomas~G. Kurtz, \emph{{M}arkov processes.
  {C}haracterization and convergence.}, John Wiley \& Sons Inc., 1986.
  \MR{838085}

\bibitem[EP14]{MR3271518}
Hans-J{\"u}rgen Engelbert and Goran Peskir, \emph{Stochastic differential
  equations for sticky {B}rownian motion}, Stochastics \textbf{86} (2014),
  no.~6, 993--1021. \MR{3271518}

\bibitem[Fel52]{MR0047886}
William Feller, \emph{The parabolic differential equations and the associated
  semi-groups of transformations}, Ann. of Math. (2) \textbf{55} (1952),
  468--519.

\bibitem[Fel54]{MR0063607}
\bysame, \emph{Diffusion processes in one dimension}, Trans. Amer. Math. Soc.
  \textbf{77} (1954), 1--31.

\bibitem[HCA17]{MR3607798}
Hatem Hajri, Mine Caglar, and Marc Arnaudon, \emph{Application of stochastic
  flows to the sticky {B}rownian motion equation}, Electron. Commun. Probab.
  \textbf{22} (2017), Paper No. 3, 10.

\bibitem[HL81]{MR598937}
J.~Michael Harrison and Austin~J. Lemoine, \emph{Sticky {B}rownian motion as
  the limit of storage processes}, J. Appl. Probab. \textbf{18} (1981), no.~1,
  216--226.

\bibitem[IM63]{MR154338}
K.~It\^{o} and H.~P. McKean, Jr., \emph{Brownian motions on a half line},
  Illinois J. Math. \textbf{7} (1963), 181--231. \MR{154338}

\bibitem[Jac79]{MR542115}
Jean Jacod, \emph{Calcul stochastique et probl{\`e}mes de martingales}, Lecture
  Notes in Mathematics, vol. 714, Springer, Berlin, 1979. \MR{542115}

\bibitem[Kal02]{MR1876169}
Olav Kallenberg, \emph{Foundations of modern probability}, 2nd ed.,
  Springer-Verlag, 2002. \MR{1876169}

\bibitem[Kal21]{MR4226142}
\bysame, \emph{Foundations of modern probability}, Probability Theory and
  Stochastic Modelling, vol.~99, Springer, Cham, 2021.

\bibitem[Kyp14]{MR3155252}
Andreas~E. Kyprianou, \emph{Fluctuations of {L}\'evy processes with
  applications}, second ed., Universitext, Springer, Heidelberg, 2014.
  \MR{3155252}

\bibitem[Pro04]{MR2020294}
Philip~E. Protter, \emph{Stochastic integration and differential equations},
  2nd ed., Springer-Verlag, 2004. \MR{2020294}

\bibitem[Rog68]{MR0242261}
B.~A. Rogozin, \emph{The local behavior of processes with independent
  increments}, Teor. Verojatnost. i Primenen. \textbf{13} (1968), 507--512.
  \MR{0242261}

\bibitem[RS15]{MR3325271}
Mikl{\'o}s~Z. R{\'a}cz and Mykhaylo Shkolnikov, \emph{Multidimensional sticky
  {B}rownian motions as limits of exclusion processes}, Ann. Appl. Probab.
  \textbf{25} (2015), no.~3, 1155--1188.

\bibitem[RUB22]{Ramirez2022}
Miriam Ram\'irez and Ger\'onimo Uribe~Bravo, \emph{On the characterization of
  spectrally positive {L\'e}vy processes on {$[0,\infty)$}}, Work in progress,
  2022.

\bibitem[RY99]{MR1725357}
Daniel Revuz and Marc Yor, \emph{Continuous martingales and {B}rownian motion},
  3rd ed., Grundlehren der Mathematischen Wissenschaften, vol. 293,
  Springer-Verlag, Berlin, 1999. \MR{1725357}

\bibitem[War97]{MR1478711}
Jonathan Warren, \emph{Branching processes, the {R}ay-{K}night theorem, and
  sticky {B}rownian motion}, S{\'e}minaire de {P}robabilit{\'e}s, {XXXI},
  Lecture Notes in Math., vol. 1655, Springer, Berlin, 1997, pp.~1--15.
  \MR{1478711}

\bibitem[Whi80]{MR561155}
Ward Whitt, \emph{Some useful functions for functional limit theorems}, Math.
  Oper. Res. \textbf{5} (1980), no.~1, 67--85. \MR{561155}

\bibitem[Whi02]{MR1876437}
\bysame, \emph{Stochastic-process limits}, Springer Series in Operations
  Research, Springer-Verlag, New York, 2002.

\bibitem[Yam94]{MR1289173}
Keigo Yamada, \emph{Reflecting or sticky {M}arkov processes with {L}\'evy
  generators as the limit of storage processes}, Stochastic Process. Appl.
  \textbf{52} (1994), no.~1, 135--164.

\end{thebibliography}

\end{document}